\documentclass{article}
\usepackage{amsmath}
\usepackage{amsthm}
\usepackage{amssymb}
\usepackage[ansinew]{inputenc}
\usepackage{url}
\usepackage[all,dvips,arc,curve,color,frame]{xy}
\input xy
\xyoption{all}

\setlength{\textwidth}{5.8in} \setlength{\textheight}{8.8in}
\setlength{\topmargin}{-.5in} \setlength{\headheight}{.25in}
\setlength{\headsep}{.25in} \setlength{\topskip}{.25in}
\setlength{\oddsidemargin}{.4in} \setlength{\evensidemargin}{0in}

\newtheorem{lemma}{Lemma}[section]

\newtheorem{theorem}[lemma]{Theorem}
\newtheorem{cor}[lemma]{Corollary}
\newtheorem{prob}[lemma]{Problem}
\newtheorem{prop}[lemma]{Proposition}
\newtheorem{defi}[lemma]{Definition}







\DeclareMathOperator\ima{im}

\DeclareMathOperator\id{id}

\newcommand{\con}{\mathrm{Con}}

\newcommand{\miff}{\Leftrightarrow}
\newcommand{\lan}{\langle}
\newcommand{\ran}{\rangle}
\newcommand{\al}{\alpha}

\newcommand{\lam}{\lambda}

\newcommand{\vep}{\varepsilon}

\newcommand{\si}{\sigma}

\newcommand{\ty}{\mbox{\tiny $Y$}}
\newcommand{\tz}{\mbox{\tiny $Z$}}
\newcommand{\tyz}{\mbox{\tiny $Y\!\cap\! Z$}}
\newcommand{\ey}{\vep_{\ty}}
\newcommand{\ez}{\vep_{\tz}}
\newcommand{\eyz}{\vep_{\tyz}}

\newcommand{\End}{\mathop{\mathrm{End}}\nolimits}
\newcommand\A{\Bbb A}
\newcommand\lia{\A^{\!l}}
\newcommand\aia{\A^{\!a}}
\newcommand\mia{\A^{\!m}}
\newcommand\qia{\A^{\!q}}

\newcommand\eia{\A^{\!e}}
\newcommand\gia{\A^{\!g}}
\newcommand\clf{F_{\!\mathrm{cl}}}
\newcommand\clfa{(F_1)_{\mathrm{cl}}}
\newcommand\clfb{(F_2)_{\mathrm{cl}}}

\title{The Largest Subsemilattices of the Endomorphism Monoid of an Independence Algebra}

\author{Jo\~ao Ara\'ujo\\
  {\small Universidade Aberta, R. Escola Polit\'{e}cnica, 147}\\
  {\small 1269-001 Lisboa, Portugal}\\{\footnotesize \&}\\
  {\small Centro de \'{A}lgebra, Universidade de Lisboa}\\
  {\small 1649-003 Lisboa, Portugal, jaraujo@ptmat.fc.ul.pt}
  \and Wolfram Bentz\\
  {\small Centro de \'{A}lgebra, Universidade de Lisboa}\\
  {\small 1649-003 Lisboa, Portugal, wfbentz@fc.ul.pt}
\and Janusz Konieczny\\
{\small Department of Mathematics, University of Mary Washington}\\
{\small Fredericksburg, Virginia 22401, USA, jkoniecz@umw.edu}}
\date{}
\pagestyle{plain}

\begin{document}
\maketitle

\begin{abstract}
An algebra $\A$ is said to be an independence algebra
if it is a matroid algebra and every map $\al:X\to A$, defined on a  basis $X$ of $\A$,
can be extended to an endomorphism of~$\A$.
These algebras are particularly  well behaved generalizations of vector spaces,
and hence they naturally appear in several branches of mathematics such as model theory, group theory, and semigroup theory.

It is well
known that matroid algebras have a well defined notion of dimension.
Let~$\A$ be any independence algebra of finite dimension $n$, with at least two elements.
Denote by $\End(\A)$ the monoid of endomorphisms of $\A$.
We prove that a largest subsemilattice of $\End(\A)$ has either $2^{n-1}$ elements
(if the clone of $\A$ does not contain any constant operations) or $2^n$ elements (if
the clone of $\A$ contains constant operations).
As corollaries, we obtain formulas for the size of the largest subsemilattices of:
some variants of the monoid of linear operators of a finite-dimensional vector space, the monoid
of full transformations on a finite set $X$, the monoid of partial transformations on $X$,
the monoid of endomorphisms of a free $G$-set with a finite set of free generators, among others.

The paper ends with a relatively large number of problems that might attract attention of experts in linear algebra, ring theory, extremal combinatorics, group theory, semigroup theory,
universal algebraic geometry, and universal algebra.
\vskip 2mm

\noindent\emph{$2010$ Mathematics Subject Classification\/}. 08A35, 20M20.

\vskip 2mm

\noindent \emph{Keywords}: Independence algebra; semilattice; monoid of endomorphisms; dimension.

\end{abstract}

\section{Introduction}
\setcounter{equation}{0}

Let $\A=\lan A;F\ran$  be an algebra (as understood in universal algebra \cite{Gr68}).
We say that $\A$ is a \emph{matroid algebra} if  the closure operator \emph{subalgebra generated by},
denoted $\langle \cdot \rangle$, satisfies the \emph{exchange property}, that is, for all $X\subseteq A$ and $x,y\in A$,
\begin{equation}
\label{eq1}
x \in \langle X\cup \{y\}\rangle \mbox{ and } x\not\in X\Rightarrow y \in \langle X\cup \{x\}\rangle.
\end{equation}
(In certain contexts of model theory, a closure
system satisfying the exchange property is called a \emph{pre-geometry}.)
A set $X\subseteq A$ is said to be \emph{independent} if $X$ has no redundant elements, that is,
$X$ is a minimal generating set for the subalgebra it generates: for all $x\in X$, $x\not\in \langle X\setminus \{x\}\rangle.$

By standard arguments in matroid theory, we know that a matroid algebra has a \emph{basis} (an independent generating set),
and all bases have the same cardinality; thus matroid algebras admit a notion of dimension,
defined as the cardinality of one (and hence all) of its bases.
An \emph{independence algebra} is a matroid algebra satisfying the \emph{extension property}, that is,  every map $\al:X\to A$,
defined on a basis $X$ for $\A$, can be extended to an endomorphism of $\A$.
Examples of independence algebras are vector spaces, affine spaces (as defined below), unstructured sets, and free $G$-sets.

The class of independence algebras was introduced by Gould in 1995 \cite{gould}. Her motivation
was to understand the properties shared by vector spaces and sets that result in similarities
in the structure of their monoids of endomorphisms.
As pointed out by Gould, this notion goes back to the 1960s, when the class of
$v^*$-algebras was defined by Narkiewicz \cite{Na61}.
(The ``$v$'' in $v^*$-algebras stands for ``vector''
since the $v^*$-algebras were primarily seen as generalizations of the vector spaces.) In fact,
the $v^*$-algebras can be defined as the matroid algebras with the extension property \cite{Na64}, just like independence algebras, but with a  slight difference. In the context of independence algebras, the subalgebra generated by the empty set is
the subalgebra generated by all nullary operations; while in $v^*$-algebras, it is the subalgebra generated by the images of all constant operations. The effect of this difference, so tiny that it has gone unnoticed by previous authors,
is that there do exist $v^*$-algebras $\A=\lan A;F\ran$ that are not independence algebras, namely exactly those
for which:
\begin{enumerate}
\item $|A| \ge 2$,
\item $\A$ does not have any nullary operations,
\item every element of $A$ is the image of some constant operation from the clone of $\A$.
\end{enumerate}

We remark that our result can be extended, with little effort, to include all $v^*$-algebras,
and hence it also holds for this slightly larger class. The key observation is that if a $v^*$-algebra is not an independence algebra,
then its monoid of endomorphisms is trivial.

By the end of the 1970s, G{\l}azek wrote a survey paper on these and related algebras,
including a bibliography of more than 800 items \cite{glazek}. (See also \cite{JMS,ArWe} and the references therein.) About ten years later,
independence algebras naturally appeared in semigroup theory. (For a survey, see \cite{arfo}; see also \cite{Ar2,Ar4,Ar1,Ar5,Ar3,AS,AS2,cameronSz,F1,F2} for some results on independence algebras and semigroups.)

Between the 1960s
(when $v^*$-algebras were introduced by experts in universal algebra) and the 1990s
(when they were rediscovered by experts in semigroup theory),
these algebras
played a very important role in model theory. Givant in the U.S. \cite{gi1,gi2,gi2.1,gi3,gi4,gi5,gi6}
and Palyutin in Russia \cite{pa}, independently solved an important classification problem in model theory,
and their solution involved independence algebras.
(For a detailed account of the importance of independence algebras for model theory, see \cite{ArEdGi}.)

Independence algebras have a structure rich enough to allow classification theorems.
One, due to Cameron and Szab\'o \cite{cameronSz}, provides a classification of finite independence algebras.
Another, due to Urbanik \cite{Ur63,Ur65,urbanik},
classifies all $v^*$-algebras that have no nullary operations.
Since our goal is to prove a theorem about all finite-dimensional
independence algebras, we use Urbanik's classification and, in a
separate section, handle the case of independence algebras with nullary operations (noting again that
all independence algebras are $v^*$-algebras).

A \emph{semilattice} is a commutative semigroup consisting entirely of idempotents. That is, a semigroup
$S$ is a semilattice if and only if for all $a,b\in S$, $aa=a$ and $ab=ba$. A semilattice
can also be defined as a partially ordered set $(S,\leq)$ such that the greatest lower
bound $a\wedge b$ exists for all $a,b\in S$. Indeed, if $S$ is a semilattice, then $(S,\leq)$, where
$\leq$ is a relation on $S$ defined by $a\leq b$ if $a=ab$, is a poset with $a\wedge b=ab$
for all $a,b\in S$. Conversely, if $(S,\leq)$ is a poset such that $a\wedge b$ exists for all $a,b\in S$,
then $S$ with multiplication $ab=a\wedge b$ is a semilattice \cite[Proposition~1.3.2]{Ho95}). This paper's study of commuting idempotent endomorphisms is closely linked to the study of centralizers of idempotents \cite{andre,ArKo03,ArKo04,Ko02} and
general centralizers of transformations \cite{ArKo13,KoLi98}.

For an independence algebra $\A$, denote by $\End(\A)$ the monoid of endomorphisms of $\A$. The aim
of this paper is to prove the following theorem.

\begin{theorem}
\label{imain}
Let $\A=\lan A;F\ran$ be an independence algebra of finite dimension $n$, with $|A|\geq2$. Then the largest subsemilattices
of $\End(\A)$ have either $2^{n-1}$ or $2^n$ elements, with the latter happening exactly
when  the clone of $\A$ contains constant operations.
\end{theorem}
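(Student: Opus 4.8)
The plan is to reduce the problem to a purely order-theoretic count, exploiting the fact that a subsemilattice of $\End(\A)$ is precisely a commuting family of idempotents, and then to use the matroid structure of $\A$ to bound that count. First I would record two facts about any commuting set of idempotents $S\subseteq\End(\A)$. For $e\in S$ the fixed-point set $\fix(e)$ coincides with the image $\ima e$, a subalgebra of $\A$, and for commuting $e,f$ one checks directly that $\ima(ef)=\ima e\cap\ima f$. More importantly, $e\mapsto\ima e$ is injective on $S$: if $e,f\in S$ have a common image $B$, then $g=ef=fe$ is an idempotent with $\ima g=B$ and $g=ge$, so since $g$ fixes $B$ pointwise we get $g(x)=g(e(x))=e(x)$ for all $x$, whence $e=g=f$. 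Thus $e\mapsto\ima e$ embeds $S$, as a meet-semilattice, into the subalgebra lattice of $\A$. Since any subsemilattice of size $>N$ contains a finite subsemilattice of size $>N$, it suffices to bound finite $S$, and I may assume $S$ has a least element $e_0$ and (adjoining $\id$) a greatest element $\id$.

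For the lower bounds I would exhibit the extremal semilattices explicitly via the extension property. Fix a basis $x_1,\dots,x_n$ of $\A$. If the clone of $\A$ contains a constant operation then, for $n\ge 2$, the coordinate identity $\lan x_1\ran\cap\lan x_2\ran=\lan\emptyset\ran$ forces $\lan\emptyset\ran\neq\emptyset$, so there is an element $c\in\lan\emptyset\ran$ fixed by every endomorphism; for each $S\subseteq\{1,\dots,n\}$ let $e_S$ be the endomorphism determined by $e_S(x_i)=x_i$ for $i\in S$ and $e_S(x_i)=c$ otherwise. Using $e_S(c)=c$ one computes $e_Se_T=e_{S\cap T}$, so $\{e_S\}$ is a semilattice of size $2^n$. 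If the clone has no constant operation I instead fix the coordinate $x_1$, range only over $S\ni 1$, and set $e_S(x_i)=x_1$ for $i\notin S$; the same computation yields a semilattice of size $2^{n-1}$.

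For the upper bound the goal is to show that $S$ embeds, as a meet-semilattice, into the Boolean lattice $2^{\{1,\dots,n\}}$ (and into $2^{\{2,\dots,n\}}$ when there are no constants), which gives the bounds $2^n$ and $2^{n-1}$ at once. The heart of the matter is a simultaneous-diagonalisation statement: there is a single basis $x_1,\dots,x_n$ of $\A$ such that every $e\in S$ satisfies $\ima e=\lan x_i : i\in T(e)\ran$ for some $T(e)\subseteq\{1,\dots,n\}$. Granting this, injectivity of $e\mapsto\ima e$ together with $\lan x_i:i\in T\ran\cap\lan x_i:i\in U\ran=\lan x_i:i\in T\cap U\ran$ shows that $e\mapsto T(e)$ is an injective meet-homomorphism into $2^{\{1,\dots,n\}}$. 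In the no-constants case the least image $\ima e_0$ is a nonempty subalgebra and, since the clone has no constant we have $\lan\emptyset\ran=\emptyset$, so $\dim(\ima e_0)\ge 1$; as $\ima e_0\subseteq\ima e$ for all $e$, I may take $x_1\in\ima e_0$, forcing $1\in T(e)$ for every $e$, so $S$ embeds into $2^{\{2,\dots,n\}}$ and $|S|\le 2^{n-1}$.

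The main obstacle is exactly the simultaneous-diagonalisation step: proving that a commuting family of idempotent endomorphisms can be put into a common diagonal form — equivalently, that the lattice of images $\{\ima e:e\in S\}$ is distributive of height at most $n$. This fails for arbitrary (non-commuting) idempotents and genuinely uses commutativity together with the exchange property, and it is the point at which I expect to invoke Urbanik's classification of the $v^*$-algebras without nullary operations, verifying the statement type by type (unstructured sets, the affine- and vector-space-type algebras over a division ring, and free $G$-sets), and then to treat the independence algebras with nullary operations separately, where a constant $c\in\lan\emptyset\ran$ plays the role of the common sink. In each type the analysis of when two idempotents commute reduces to a compatibility condition on their images and kernels that furnishes the required common basis; assembling these cases completes the upper bound and hence the theorem.
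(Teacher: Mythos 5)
Your proposal stands or falls on the ``simultaneous diagonalisation'' lemma, and as stated that lemma is false, so there is a genuine gap exactly where you defer the work. Consider the monoid independence algebras $\mia$ of Urbanik's classification (Theorem~\ref{tcla}(a)): here $\dim(\mia)=1$, every basis is a single invertible element $u$ with $\lan u\ran=A$, and $\lan\emptyset\ran=\emptyset$. The non-identity idempotents are precisely the maps $\vep(a)=ac_0$ with $c_0$ a non-unit (hence a left zero), and $\ima(\vep)=Ac_0$ is a set of left zeros that is \emph{not} the subalgebra generated by any subset of any basis; so the promised embedding of a commuting family into $2^{\{1,\dots,n\}}$ via $e\mapsto T(e)$ cannot exist in this type, and the bound $|E|\le 2=2^n$ must instead be proved directly (this is Lemma~\ref{lhub}: any two commuting non-identity idempotents of $\End(\mia)$ coincide). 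Relatedly, the type list you intend to verify --- unstructured sets, vector/affine-type algebras, free $G$-sets --- is not Urbanik's classification: it omits the monoid, quasifield, and exceptional algebras (Theorem~\ref{tcla}(a)--(c)), which are exactly the low-dimensional types where the diagonalisation picture degenerates and where the paper argues by classifying the idempotents outright (non-identity idempotents are constant maps in the quasifield case, Theorem~\ref{tqia}; five explicit maps in the exceptional case, Theorem~\ref{teia}). Since the case-by-case verification is the entire content of the upper bound, deferring it while misremembering the case list leaves the proof incomplete, and in at least one case the uniform statement you would be verifying is not true.

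What you do have is sound and partly nicer than the paper's treatment: the injectivity of $e\mapsto\ima(e)$ on commuting idempotents is correct and is precisely the paper's tool in the group-action case (proof of Theorem~\ref{tgia}), your reduction to finite subsemilattices and the meet-compatibility $\ima(ef)=\ima(e)\cap\ima(f)$ are fine, and your uniform lower-bound construction $e_Se_T=e_{S\cap T}$ (with the sink $c$, resp.\ the pinned basis element $x_1$) is cleaner than the paper's per-type constructions and does work for all types, including the diagonalisation-friendly ones (group action, linear, affine), where your scheme of pinning $x_1\in\ima(e_0)$ for the least element $e_0$ correctly explains the drop from $2^n$ to $2^{n-1}$. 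However, your justification for the existence of $c$ is wrong: the identity $\lan x_1\ran\cap\lan x_2\ran=\lan\emptyset\ran$ fails, e.g., in a linear independence algebra $\lia=(A,A_0,K)$ with $A_0\ne\{0\}$ and no nullary operations, where $\lan\emptyset\ran=\emptyset$ by convention while the clone contains many constant operations; the correct (and simpler) route is that of Lemma~\ref{luna}: a constant operation in the clone yields a unary constant with value $c$, and $c$ is then fixed by every endomorphism, with $c$ never a basis element since $|A|\ge2$. Finally, your handling of algebras with nullary operations is only a gesture; the paper's Proposition~\ref{plink} does real work there, showing that deleting the nullary operations preserves both the dimension and the endomorphism monoid, and the degenerate cases $n=0$ and $n=1$ (where your $\lan x_1\ran\cap\lan x_2\ran$ device is unavailable) need separate, if easy, arguments.
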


Since vector spaces and unstructured sets are independence algebras, we have
the following corollaries.

 \begin{cor}
\label{cor1}
 If $V$ is a vector space of finite dimension $n$, then the largest subsemilattices of $\End(V)$ have $2^n$ elements.
 \end{cor}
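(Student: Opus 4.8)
The plan is to derive this directly from Theorem~\ref{imain}, so the only genuine work is to verify that a finite-dimensional vector space meets all the hypotheses and then to determine which of the two cases applies. First I would recall, as already noted in the Introduction, that every vector space $V$ over a field $F$ is an independence algebra: the spanning operator $\langle\cdot\rangle$ satisfies the exchange property~\eqref{eq1}, and any map defined on a basis extends to a linear map, which is precisely an endomorphism. The independence-algebra dimension of $V$ coincides with its usual linear dimension $n$, because in both settings the independent sets are exactly the linearly independent sets and a basis is an independent generating set. Finally, for $n\geq 1$ the space $V$ contains the zero vector together with at least one nonzero vector, so $|V|\geq 2$ and the hypotheses of Theorem~\ref{imain} are satisfied; the degenerate case $n=0$ gives $V=\{0\}$, a trivial monoid whose unique subsemilattice has $2^0=1$ element, matching the stated formula.

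It then remains to decide whether the clone of $V$ contains a constant operation, since this is exactly what separates the value $2^n$ from $2^{n-1}$ in the theorem. Here I would observe that the zero vector is realized as a constant operation in the clone in either of the usual formulations: if $0$ is taken as a nullary operation in the signature of $V$, it is a constant operation outright; and even if one works with a signature having no nullary symbols, scalar multiplication by $0\in F$ furnishes the unary term $0\cdot x$, which evaluates to $0$ for every input and is therefore a constant unary term operation. Hence the clone of $V$ contains a constant operation.

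Combining these two observations, Theorem~\ref{imain} applies with its ``clone contains constant operations'' alternative, and so the largest subsemilattices of $\End(V)$ have $2^n$ elements, as claimed. I do not anticipate a real obstacle: the whole content of the corollary is the recognition that a vector space is an independence algebra of the correct dimension whose clone contains the zero constant. The single point deserving a moment's care is the meaning of ``constant operation in the clone''—in particular that producing one such operation (namely the constant $0$) is enough to trigger the $2^n$ case, rather than requiring every element of $V$ to arise as the value of a constant operation.
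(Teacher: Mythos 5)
Your proposal is correct and follows essentially the same route as the paper, which presents this statement as an immediate consequence of Theorem~\ref{imain}: a vector space is an independence algebra whose independence-algebra dimension equals its linear dimension and whose clone contains a constant operation (the zero, via the nullary symbol $0$ or the unary term $0\cdot x$), so the $2^n$ alternative applies. Your added care about the degenerate case $V=\{0\}$ (where $|A|\geq 2$ fails but the count $1=2^0$ still holds) and your observation that a single constant operation suffices to trigger the $2^n$ case are both sound and consistent with the paper's treatment.
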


 \begin{cor}
\label{cor2}
 If $X$ is a nonempty finite set of size $n$, then the largest subsemilattices of $T(X)$, the
 monoid of full transformations on $X$, have $2^{n-1}$ elements.
 \end{cor}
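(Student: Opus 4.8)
The plan is to realize $T(X)$ as the endomorphism monoid of an independence algebra and then invoke Theorem~\ref{imain}. The natural choice is the \emph{unstructured set}: let $\A$ be $X$ equipped with no operations at all, so that $F=\emptyset$.

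First I would verify that this $\A$ is an independence algebra of dimension $n$. With $F=\emptyset$ the operator $\lan\cdot\ran$ is the identity on subsets, i.e.\ $\lan Y\ran=Y$ for every $Y\subseteq X$; in particular $\lan\emptyset\ran=\emptyset$. The exchange property~\eqref{eq1} then holds trivially (if $x\in\lan X\cup\{y\}\ran$ and $x\notin X$ then $x=y$, whence $y\in\lan X\cup\{x\}\ran$), so $\A$ is a matroid algebra. Every subset of $X$ is independent, $X$ is the unique basis, and hence $\dim\A=|X|=n$. Since there are no operations to preserve, every self-map of $X$ is an endomorphism; this simultaneously gives the extension property and the identification $\End(\A)=T(X)$.

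Second, I would decide which branch of Theorem~\ref{imain} applies by examining the clone of $\A$. Having no basic and no nullary operations, $\A$ has a clone consisting solely of projections; for $n\ge2$ no projection is constant, so the clone of $\A$ contains no constant operations. Theorem~\ref{imain} then yields that the largest subsemilattices of $\End(\A)=T(X)$ have $2^{n-1}$ elements, as claimed. The hypothesis $|A|\ge2$ excludes $n=1$, but that case is immediate, since $T(X)$ is then trivial and has $1=2^{0}$ elements. The only step requiring genuine care is the clone computation: it is precisely the distinction between the presence and absence of constant operations that separates the two possible answers $2^{n-1}$ and $2^{n}$ in the theorem, so one must confirm that the unstructured set falls on the ``no constant operations'' side rather than inadvertently assuming the vector space answer of Corollary~\ref{cor1}.
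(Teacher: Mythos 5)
Your proposal is correct and is exactly the argument the paper intends: it derives Corollary~\ref{cor2} from Theorem~\ref{imain} by viewing $X$ as the unstructured independence algebra $\lan X;\emptyset\ran$, whose endomorphism monoid is $T(X)$ and whose clone (projections only) contains no constant operations when $n\geq2$. Your separate treatment of $n=1$, where the hypothesis $|A|\geq2$ of the theorem fails but $|T(X)|=1=2^{n-1}$ directly, is a careful touch the paper leaves implicit.
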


The monoid $P(X)$ of all partial transformations on a finite set $X=\{1,\ldots,n\}$
is isomorphic to the endomorphism monoid of
the independence algebra $\A=\lan X\cup\{0\};\{f\}\ran$, in which $f(x)=0$ is a constant operation
in the clone of $\A$.
Therefore, we have another corollary.

\begin{cor}
\label{cor3}
 If $X$ is a nonempty finite set of size $n$, then the largest subsemilattices of $P(X)$ have $2^{n}$ elements.
\end{cor}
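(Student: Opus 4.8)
The plan is to apply Theorem~\ref{imain} to the independence algebra $\A=\lan X\cup\{0\};\{f\}\ran$ identified in the paragraph preceding the statement, using the isomorphism $P(X)\cong\End(\A)$ recorded there. A monoid isomorphism maps subsemilattices to subsemilattices bijectively, so a largest subsemilattice of $P(X)$ has the same number of elements as a largest subsemilattice of $\End(\A)$; hence it suffices to determine the dimension of $\A$ and then to invoke the theorem. The text already supplies that $\A$ is an independence algebra and that $f$, the constant map with value $0$, belongs to the clone of $\A$, so the only computation left to carry out is the dimension.

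To find $\dim\A$, I would first describe the operator $\lan\cdot\ran$. Since the single basic operation $f$ sends every element to $0$, any nonempty subset $Y$ of $X\cup\{0\}$ generates $Y\cup\{0\}$, and no element of $X=\{1,\dots,n\}$ can be produced from the others; indeed, for $i\in X$ one has $i\notin\lan X\setminus\{i\}\ran=(X\setminus\{i\})\cup\{0\}$. Consequently a generating set must contain all of $X$, so every basis includes these $n$ elements. On the other hand $0\in\lan X\ran$ (as $f(1)=0$), so $0$ is redundant and cannot belong to an independent set together with $X$; thus $X$ itself is the unique basis and $\dim\A=n$.

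Finally, since $|X\cup\{0\}|=n+1\geq2$ and the clone of $\A$ contains the constant operation $f$, Theorem~\ref{imain} yields that the largest subsemilattices of $\End(\A)$, and therefore of $P(X)$, have $2^n$ elements, as claimed. I expect the only subtle step to be the dimension count: one must notice that the distinguished element $0$, being the image of the constant operation, lies in the closure of every nonempty set and hence contributes nothing to a basis, so that $\dim\A$ equals $n$ rather than $n+1$. With that observed, the corollary follows immediately from the main theorem.
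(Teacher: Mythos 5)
Your proposal is correct and follows exactly the route the paper intends: it treats $P(X)$ as $\End(\A)$ for the independence algebra $\A=\lan X\cup\{0\};\{f\}\ran$ and applies Theorem~\ref{imain}, noting the clone contains the constant operation $f$. The only detail the paper leaves implicit---that $\dim(\A)=n$ because $0$ lies in the closure of every nonempty set and so $X$ is the unique basis---you verify correctly, so your write-up is a sound filling-in of the paper's one-line derivation.
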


In recent years,
many papers have been devoted to connections between a given
algebra and some graphs induced by the algebra. Examples include
the zero-divisor graph (more than 150 papers have been written on these graphs in the last 10 years),
the commuting graph (more than $50$ papers in the last 10 years), and the power graph.
The goal is to investigate to which extent the induced graph shapes the structure of the algebra itself.
Since the endomorphism monoid of an independence algebra is deeply connected with its idempotents
(for a finite-dimensional independence algebra, the singular endomorphisms are  idempotent generated \cite{Ar1,F1}; see also \cite{andre2,ArCa1,ArCa2,ArMC}),
it is natural to consider the idempotent commuting graph of $\End(\A)$.
Thus, from this point of view, Theorem~\ref{imain} provides the clique number of such a graph.

Another interesting connection is provided by \cite{Ed}, where pairs of commuting idempotent endomorphisms of a group are used to describe all associative interchange rings and nearrings.

The paper is organized as follows. In Section~\ref{spre}, we provide relevant definitions and terminology.
Since we have not been able to find a general argument that works for all independence algebras,
we rely on the classification of independence algebras without nullary operations  obtained by Urbanik.
We present this classification in Section~\ref{scla}.
In Sections~\ref{scia}--\ref{saia}, we prove our theorem for each type of independence algebras without nullary operations.
In Section~\ref{smth}, we use these results and a proposition that links independence algebras with and without nullary operations
to prove our theorem in all generality. Finally, in Section~\ref{spro}, we present some problems.

\section{Preliminaries}
\label{spre}
In this section, we provide terminology and definitions on independence
algebras that we will need in the paper.

Let $\A=\lan A;F\ran$  be an algebra, that is, $A$ is a nonempty set (called the \emph{universe}) and $F$ is a set
of operations on $A$ (called the \emph{fundamental operations}) \cite[page~8]{Gr68}.
As customary, we will identify a nullary operation $f()=a$ with $a\in A$.
A function $\al:A\to A$ is called an \emph{endomorphism} of $\A$ if it preserves
all operations in $F$, that is, for all $k\geq0$, if $f$ is a $k$-ary
fundamental operation and $a_1,\ldots,a_k\in A$, then $\al(f(a_1,\ldots,a_k))=f(\al(a_1),\ldots,\al(a_k))$.
The set $\End(\A)$ is the monoid under the composition of functions.

For every $k\geq1$ and $1\leq i\leq k$, we will
denote by $p^k_i$ the $k$-ary projection on the $i$th coordinate, that is, $p^k_i(x_1,\ldots,x_k)=x_i$.
The \emph{clone} of $\A$ is the smallest set of operations on $A$ that contains $F$ and all
projection operations, and is closed under generalized composition \cite[page~45]{Gr68}.
We will denote the clone of $\A=\lan A;F\ran$ by $\clf$. Note that our definition of clone includes nullary functions,
contrary to some authors.

\begin{defi}
\label{deq}
{\rm
We say that algebras $\A_1=\lan A;F_1\ran$ and $\A_2=\lan A;F_2\ran$ are \emph{equivalent}
if $\clfa=\clfb$ \cite[page~45]{Gr68}.
}
\end{defi}

For a nonempty subset $X$ of $A$, we denote by $\lan X\ran$ the subalgebra of $\A$ generated by $X$ \cite[page~35]{Gr68}.
Let $\con$ be the set of nullary operations in $F$. As in \cite[page~35]{Gr68}, we extend the definition of $\lan X\ran$
to the empty set: $\lan\emptyset\ran=\lan\con\ran$ if $\con\ne\emptyset$, and $\lan\emptyset\ran=\emptyset$ if $\con=\emptyset$.

A set $X\subseteq A$ (possibly empty) is said to be \emph{independent}
if for all $x\in X$, $x\not\in \langle X\setminus \{x\}\rangle$.
The exchange property (\ref{eq1}) has several equivalent formulations in terms of independent sets \cite[page~50]{mcmct}.

 \begin{prop}
\label{506}
 For any  algebra $\A$, the following conditions are equivalent:
 \begin{itemize}
  \item [\rm(a)] $\A$ satisfies the exchange property {\rm(\ref{eq1})};
  \item [\rm(b)] for all $X\subseteq A$ and $a\in A$, if $X$ is
 independent and $a\notin\langle X\rangle$, then $X\cup \{a\}$ is independent;
  \item [\rm(c)] for all $X,Y\subseteq A$, if $Y$ is a maximal independent subset
 of $X$, then $\langle X\rangle=\langle Y \rangle$;
  \item [\rm(d)] for all $X,Y\subseteq A$, if $Y$ is an independent subset of $X$,
 then there is an independent set $Z$ with $Y\subseteq Z\subseteq X$ and
 $\langle Z\rangle =\langle X\rangle$.
 \end{itemize}
 \end{prop}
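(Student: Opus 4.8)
The plan is to prove the four conditions equivalent by establishing the cycle of implications (a)$\Rightarrow$(b)$\Rightarrow$(c)$\Rightarrow$(d)$\Rightarrow$(a). Throughout I will use the standard fact that the operator $\langle\cdot\rangle$ is an \emph{algebraic} (finitary) closure operator: every element of $\langle X\rangle$ already lies in $\langle X_0\rangle$ for some finite $X_0\subseteq X$. This has two consequences I will exploit repeatedly. First, independence has finite character (a set is independent if and only if each of its finite subsets is), since dependence of $X$ is witnessed by some $x\in X$ and a finite $F\subseteq X\setminus\{x\}$ with $x\in\langle F\rangle$. Second, the union of a chain of independent sets is again independent, so Zorn's Lemma applies to the poset of independent subsets of any fixed set; this is what lets the argument work in the possibly infinite-dimensional generality of the statement.

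For (a)$\Rightarrow$(b), I assume the exchange property, take $X$ independent with $a\notin\langle X\rangle$, and suppose $X\cup\{a\}$ were dependent. Since $a\notin\langle X\rangle$ rules out $a$ as the redundant element, there is some $x\in X$ with $x\in\langle(X\setminus\{x\})\cup\{a\}\rangle$, while independence of $X$ gives $x\notin\langle X\setminus\{x\}\rangle$. Applying the exchange property~(\ref{eq1}) to the set $X\setminus\{x\}$ and the elements $x,a$ then yields $a\in\langle(X\setminus\{x\})\cup\{x\}\rangle=\langle X\rangle$, a contradiction. For (b)$\Rightarrow$(c), let $Y$ be a maximal independent subset of $X$; only $\langle X\rangle\subseteq\langle Y\rangle$ needs proof, and since $\langle\cdot\rangle$ is a closure operator it suffices to show $X\subseteq\langle Y\rangle$. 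If some $x\in X$ had $x\notin\langle Y\rangle$, then by (b) the set $Y\cup\{x\}\subseteq X$ would be independent and strictly larger than $Y$, contradicting maximality.

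For (c)$\Rightarrow$(d), given an independent $Y\subseteq X$, I apply Zorn's Lemma (justified by the finite-character remark) to extend $Y$ to a set $Z$ maximal among the independent subsets of $X$ that contain $Y$; any such $Z$ is in fact a maximal independent subset of $X$, so (c) gives $\langle Z\rangle=\langle X\rangle$, which is exactly what (d) requires.

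The main obstacle is the closing implication (d)$\Rightarrow$(a), because (a) speaks about an arbitrary set $X$ whereas (b)--(d) concern only independent sets, so the return trip must first replace $X$ by an independent ``core.'' I will first record that (d) recovers the augmentation step: if $W$ is independent and $z\notin\langle W\rangle$, then applying (d) with $Y=W$ inside the ambient set $W\cup\{z\}$ forces the resulting set to be all of $W\cup\{z\}$ (otherwise it equals $W$ and $z\in\langle W\rangle$), so $W\cup\{z\}$ is independent. Now, to verify the exchange property, suppose $x\in\langle X\cup\{y\}\rangle$ and $x\notin\langle X\rangle$; I may assume $x\neq y$, as otherwise the conclusion is immediate. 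Using (d) with $Y=\emptyset$ I extract an independent $Z_0\subseteq X$ with $\langle Z_0\rangle=\langle X\rangle$, whence $x\in\langle Z_0\cup\{y\}\rangle$ and $x\notin\langle Z_0\rangle$, so the augmentation step makes $Z_0\cup\{x\}$ independent. If the desired conclusion $y\in\langle X\cup\{x\}\rangle=\langle Z_0\cup\{x\}\rangle$ failed, a second application of augmentation would make $Z_0\cup\{x,y\}$ independent; but then, as $x\notin Z_0$ and $x\neq y$, independence of this set gives $x\notin\langle(Z_0\cup\{x,y\})\setminus\{x\}\rangle=\langle Z_0\cup\{y\}\rangle$, contradicting $x\in\langle Z_0\cup\{y\}\rangle$. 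Hence $y\in\langle X\cup\{x\}\rangle$, closing the cycle. The only delicate points are the systematic appeal to finite character to run Zorn's Lemma and the bookkeeping that reduces the general set $X$ to its independent core $Z_0$ in this final step.
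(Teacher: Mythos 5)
Your proof is correct, and strictly speaking there is no proof in the paper to compare it with: the paper states Proposition \ref{506} as a known fact, citing McKenzie--McNulty--Taylor \cite[page~50]{mcmct}, and your cycle (a)$\Rightarrow$(b)$\Rightarrow$(c)$\Rightarrow$(d)$\Rightarrow$(a) is essentially the standard closure-operator argument found in that reference. You handle the genuinely delicate points properly: finite character of independence (which legitimizes Zorn's Lemma in (c)$\Rightarrow$(d)), the observation that a set maximal among independent subsets of $X$ containing $Y$ is in fact maximal among all independent subsets of $X$, the recovery of the augmentation step from (d) applied inside the ambient set $W\cup\{z\}$, and the reduction of an arbitrary $X$ to an independent core $Z_0$ via $\langle Z_0\cup\{y\}\rangle=\langle X\cup\{y\}\rangle$ in (d)$\Rightarrow$(a). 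One point deserves explicit mention rather than silent correction: the paper's formulation (\ref{eq1}) has the hypothesis ``$x\notin X$'', whereas your (d)$\Rightarrow$(a) establishes exchange under the hypothesis $x\notin\langle X\rangle$. Your reading is the intended one, and it is in fact forced: with the literal hypothesis $x\notin X$, condition (a) would be strictly stronger and the proposition false, since (b)--(d) hold in any vector space while the literal (a) fails there (take $X=\{v\}$, $x=2v$, and $y\notin\langle v\rangle$; then $x\in\langle X\cup\{y\}\rangle$ and $x\notin X$, yet $y\notin\langle X\cup\{x\}\rangle$). Note that your (a)$\Rightarrow$(b) step is valid under either reading, since you verify the stronger hypothesis $x\notin\langle X\setminus\{x\}\rangle$ before applying exchange; so the only emendation needed is to state up front that (a) is taken in the standard form $x\in\langle X\cup\{y\}\rangle\setminus\langle X\rangle\Rightarrow y\in\langle X\cup\{x\}\rangle$, after which the proof is complete.
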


Let $\A$ be an algebra that satisfies the exchange property.
It follows from (d) of Proposition~\ref{506} that $\A$
has a maximal independent set.  Any such set -- which must necessarily generate $\A$ -- is called a \emph{basis} for $\A$.
Moreover, all {bases} for $\A$ may be characterized as minimal generating sets, and they all have the same cardinality.
This common cardinality of the bases is called the \emph{dimension} of $\A$, written $\dim(\A)$.

\begin{defi}
\label{dia}
{\rm
An algebra $\A$ is called an \emph{independence algebra} if
\begin{itemize}
  \item [\rm(1)] $\A$ satisfies the exchange property {\rm(\ref{eq1})}, and
  \item [\rm(2)] for any basis $X$ of $\A$, if $\al:X\to A$, then there is an endomorphism $\bar{\al}$ of $\A$
such that $\bar{\al}|_X=\al$.
\end{itemize}
}
\end{defi}

Condition (2) of Definition~\ref{dia} states that an independence algebra $\A$ is a free object in the variety it generates
and any basis for $\A$ is a set of free generators.

We will need the following lemmas about  algebras in general.

\begin{lemma}
\label{lequ}
If $\A_1$ and $\A_2$ are equivalent  algebras, then $\End(\A_1)=\End(\A_2)$.
\end{lemma}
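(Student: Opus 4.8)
The plan is to show that for any algebra $\A=\lan A;F\ran$, a function $\al\colon A\to A$ belongs to $\End(\A)$ if and only if $\al$ preserves every operation in the clone $\clf$; the lemma then follows at once from the hypothesis $\clfa=\clfb$.

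One direction is immediate: since $F\subseteq\clf$, any $\al$ that preserves all operations in $\clf$ in particular preserves the fundamental operations $F$, and so is an endomorphism. For the converse, I would fix $\al\in\End(\A)$ and let $P_{\al}$ be the set of all operations on $A$ (of every arity $k\geq0$) that $\al$ preserves, and then show that $P_{\al}$ is a clone. Each projection $p^k_i$ lies in $P_{\al}$, since $\al(p^k_i(a_1,\ldots,a_k))=\al(a_i)=p^k_i(\al(a_1),\ldots,\al(a_k))$. Moreover $P_{\al}$ is closed under generalized composition: if $g\in P_{\al}$ is $m$-ary and $h_1,\ldots,h_m\in P_{\al}$ are $k$-ary, then for all $a_1,\ldots,a_k\in A$, writing $\bar a=(a_1,\ldots,a_k)$ and $\al(\bar a)=(\al(a_1),\ldots,\al(a_k))$, preservation of $g$ followed by preservation of each $h_j$ gives $\al(g(h_1(\bar a),\ldots,h_m(\bar a)))=g(\al(h_1(\bar a)),\ldots,\al(h_m(\bar a)))=g(h_1(\al(\bar a)),\ldots,h_m(\al(\bar a)))$, which is exactly the assertion that $g(h_1,\ldots,h_m)\in P_{\al}$. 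Since $\al$ is an endomorphism we have $F\subseteq P_{\al}$, and as $P_{\al}$ is a clone it contains the smallest clone over $F$, namely $\clf$.

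A small point to check is the nullary case $k=0$, where ``preserving'' a constant operation $c$ means $\al(c)=c$ (here we identify the nullary operation with its value, as the paper does). A nullary operation that enters $\clf$ as a composite $g(c_1,\ldots,c_m)$ of an $m$-ary $g$ with nullary $c_1,\ldots,c_m$ is fixed by $\al$, because $\al(g(c_1,\ldots,c_m))=g(\al(c_1),\ldots,\al(c_m))=g(c_1,\ldots,c_m)$; this is just the composition computation above specialized to $k=0$, so no separate argument is needed. Putting the two directions together, $\End(\A)=\{\,\al\colon A\to A\mid \al\text{ preserves every operation in }\clf\,\}$, which depends on $\A$ only through its clone. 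Applying this description to both $\A_1$ and $\A_2$ and invoking $\clfa=\clfb$ yields $\End(\A_1)=\End(\A_2)$.

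The argument is routine; the only step demanding any care is the bookkeeping in the generalized-composition computation (matching arities and treating the nullary operations uniformly), and I do not anticipate it causing any real difficulty.
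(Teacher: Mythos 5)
Your proof is correct and is essentially the paper's own argument: the paper disposes of the lemma with ``follows immediately from the definitions of an endomorphism and the clone,'' and what you have written is precisely that routine verification spelled out --- showing the set $P_{\al}$ of operations preserved by $\al$ is a clone containing $F$, hence contains $\clf$, so that $\End(\A)$ depends only on the clone. Your care with the nullary case is apt, since the paper's definition of clone deliberately includes nullary operations, but nothing here diverges from the intended argument.
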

\begin{proof}
The result follows immediately
from the definitions of an endomorphism and the clone of an algebra.
\end{proof}

A $k$-ary operation on an algebra $\A$ is called a \emph{constant operation} if
there is $a\in A$ such that $f(a_1,\ldots,a_k)=a$ for all $a_1,\ldots,a_k\in A$.

\begin{lemma}
\label{luna}
Let $\A=\lan A;F\ran$ be an algebra whose clone contains a constant operation. Then
$\clf$ contains a unary constant operation.
\end{lemma}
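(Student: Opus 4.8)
The plan is to take the given constant operation in the clone and collapse (or pad) its arity down to $1$ using closure under generalized composition. Let $f\in\clf$ be a constant operation, say of arity $k$ with constant value $a\in A$, so that $f(a_1,\dots,a_k)=a$ for all $a_1,\dots,a_k\in A$. The goal is to exhibit a unary operation $g\in\clf$ with $g(x)=a$ for every $x\in A$. I would split the argument according to whether $k\geq 1$ or $k=0$, since the definition of a constant operation given above also admits the nullary case $f()=a$.

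For the case $k\geq 1$ the construction is routine. Since $\clf$ contains all projection operations, in particular the unary projection $p^1_1=\id$, and since $\clf$ is closed under generalized composition, the operation $g:=f(p^1_1,\dots,p^1_1)$ (with $k$ copies of $p^1_1$) belongs to $\clf$. This $g$ is unary, and for every $x\in A$ we have $g(x)=f(p^1_1(x),\dots,p^1_1(x))=f(x,\dots,x)=a$. Thus $g$ is the desired unary constant operation.

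The delicate case, and the one I expect to be the main obstacle, is $k=0$, i.e.\ when the only constant operation we are handed is the nullary operation $f()=a$. Here ordinary substitution of operations into $f$ cannot raise its arity, so one cannot simply mimic the previous composition; producing a unary operation from a nullary one requires introducing a fictitious variable. The plan is to invoke the clone's closure under generalized composition precisely in the form that allows adjoining a dummy argument: the term built from the single constant symbol for $f$, read as a term in one variable $x$ in which $x$ does not actually occur, induces the unary operation $x\mapsto a$, which therefore lies in $\clf$. I would state this step carefully, because it is exactly the point at which the paper's convention that the clone \emph{includes} nullary operations (following \cite[page~45]{Gr68}) is doing the essential work; the $k\geq 1$ case is purely mechanical, whereas the nullary case is the only one where the precise formulation of the clone matters. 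In either case we obtain a unary constant operation in $\clf$, which completes the argument.
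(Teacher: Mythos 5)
Your proposal is correct and takes essentially the same route as the paper: the paper's one-line proof is exactly your $k\geq 1$ construction, forming $h(x)=f(p^1_1(x),\ldots,p^1_1(x))\in\clf$ by closure under generalized composition with the unary projection. Your separate treatment of the nullary case is a reasonable extra precaution rather than a different argument --- the paper's formula covers $k=0$ only degenerately, under the convention that composing a nullary $f$ with the empty family of unary operations yields the unary operation $x\mapsto f()$, which is precisely the dummy-variable step you isolate and which the paper later relies on implicitly (in the proof of Proposition~\ref{prp:plink}, where a unary constant operation with image $\{a\}$ is extracted from a nullary operation $a\in\clf$).
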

\begin{proof}
Let $f$ be a constant $k$-ary operation in $\clf$. Then $f$ is defined by $f(x_1,\ldots,x_k)=a$,
where $a\in A$. Since $\clf$ contains all projections $p^k_i$ and it is closed under generalized
composition, $h(x)=f(p^1_1(x),\ldots,p^1_1(x))=a$ is in $\clf$.
\end{proof}

\section{Classification of $v^*$-algebras}
\label{scla}
The $v^*$-algebras without nullary operations
were classified by Urbanik in the 1960s \cite{Ur63,Ur65,urbanik}. In this section, we present Urbanik's classification.
Throughout this section, $\A=\lan A;F\ran$ will be an algebra without nullary operations.

\begin{defi}
\label{dmia}
{\rm
Suppose that $A$ is a monoid such that every non-unit element of $A$ is a left zero.
We say that $\A$ is a \emph{monoid independence algebra}, and write $\A=\mia$,
if for every $f\in F$, $f$ is a $k$-ary operation with $k\geq1$ such that for all $a,a_1,\ldots,a_k\in A$,
\begin{equation}
\label{dmiaeq0}
f(a_1a,\ldots,a_ka)=f(a_1,\ldots,a_k)a,
\end{equation}
and $F$ contains all unary operations that satisfy (\ref{dmiaeq0}). It is easy to see that every
unary operation $f$ that satisfies (\ref{dmiaeq0}) is defined by $f(x)=bx$, where $b\in A$.
}
\end{defi}

Let $A$ be a non-empty set on which two binary operations are defined: a multiplication $(a,b)\to ab$
and a subtraction $(a,b)\to a-b$. We say that $A$ is a \emph{quasifield} \cite{Gr63} if there is $0\in A$ such that
$a0=0a=0$ for every $a\in A$, $A\setminus\{0\}$ is a group with respect to the multiplication,
and for all $a,b,c\in A$, the following properties are satisfied:
\begin{itemize}
  \item [(i)] $a-0=a$,
  \item [(ii)] $a(b-c)=ab-ac$,
  \item [(iii)] $a-(a-c)=c$,
  \item [(iv)] $a-(b-c)=(a-b)-(a-b)(b-a)^{-1}c$\hskip 2mm if $a\ne b$.
\end{itemize}

\begin{defi}
\label{dqia}
{\rm
Suppose that $A$ is a quasifield.

We say that $\A$ is a \emph{quasifield independence algebra}, and write $\A=\qia$, if for every $f\in F$,
$f$ is a $k$-ary operation with $k\geq1$ such that for all $a,b,a_1,\ldots,a_k\in A$,
\begin{equation}
\label{dqiaeq1}
f(a-ba_1,\ldots,a-ba_k)=a-bf(a_1,\ldots,a_k),
\end{equation}
and $F$ contains all binary operations that satisfy (\ref{dqiaeq1}).
}
\end{defi}

\begin{defi}
\label{deia}
{\rm
Suppose that $A$ has four elements and $F=\{i,q\}$, where $i$ is a unary operation and $q$ is a ternary operation.

We say that $\A$ is the \emph{exceptional independence algebra}, and write $\eia=(A,i,q)$, if
$i$ is an involution without fixed points ($i(i(x))=x$ and $i(x)\ne x$ for every $x\in A$)
and $q$ is symmetrical ($q(x_1,x_2,x_3)=q(x_{\si(1)},x_{\si(2)},x_{\si(3)})$ for all
$x_1,x_2,x_3\in A$ and all permutations $\si$ of $\{1,2,3\}$) such that
for all $x,y\in A$, $q(x,y,i(x))=y$ and $q(x,y,x)=x$. (One can check that $q$ is uniquely determined
by these conditions.)
}
\end{defi}

\begin{defi}
\label{dgia}
{\rm
Suppose that $G$ is a group of permutations of $A$, and that $A_0$ is a subset of $A$ such that:
(i) all fixed points of any non-identity $g\in G$ are in $A_0$, and (ii) for every $g\in G$, $g(A_0)\subseteq A_0$.

We say that $\A$ is a \emph{group action independence algebra}, and write $\gia=(A,A_0,G)$, if
$\clf$ consists of all operations defined by
\[
f(x_1,\ldots,x_k)=g(x_j)\,\,\mbox{ or }\,\, f(x_1,\ldots,x_k)=a,
\]
where $k\geq1$, $1\leq j\leq k$, $g\in G$, and $a\in A_0$.
}
\end{defi}

\begin{defi}
\label{dlia}
{\rm
Suppose that $A$ is a linear space over a division ring $K$, and that $A_0$ is a linear subspace of $A$.

We say that $\A$ is a \emph{linear independence algebra}, and write $\lia=(A,A_0,K)$, if
$\clf$ consists of all operations defined by
\[
f(x_1,\ldots,x_k)=\sum_{i=1}^k\lam_ix_i+a,
\]
where $k\geq1$, each $\lam_i\in K$, and $a\in A_0$.
}
\end{defi}

\begin{defi}
\label{daia}
{\rm
Suppose that $A$ is a linear space over a division ring $K$, and that $A_0$ is a linear subspace of $A$.

We say that $\A$ is an \emph{affine independence algebra}, and write $\aia=(A,A_0,K)$, if
$\clf$ consists of all operations defined by
\[
f(x_1,\ldots,x_k)=\sum_{i=1}^k\lam_ix_i+a,
\]
where $k\geq1$, each $\lam_i\in K$, $\sum_{i=1}^k\lam_i=1$, and $a\in A_0$.
}
\end{defi}

 Algebras from Definitions~\ref{dmia}--\ref{daia} are independence algebras, which is already reflected
in their names. Moreover, they exhaust all possible independence algebras without nullary operations. This is due to Urbanik's classification
theorem \cite{Ur63,Ur65,urbanik}.

\begin{theorem}
\label{tcla}
Let $\A$ be an independence algebra without nullary operations of dimension at least $1$. Then $\A$ is one of the following:
\begin{itemize}
  \item [\rm(a)] a monoid independence algebra $\mia$;
  \item [\rm(b)] a quasifield independence algebra $\qia$;
  \item [\rm(c)] the exceptional independence algebra $\eia=(A,i,q)$;
  \item [\rm(d)] a group action independence algebra $\gia=(A,A_0,G)$;
  \item [\rm(e)] a linear independence algebra $\lia=(A,A_0,K)$;
  \item [\rm(f)] an affine independence algebra $\aia=(A,A_0,K)$.
\end{itemize}
Moreover, $\dim(\mia)=1$, $\dim(\qia)=2$, $\dim(\eia)=2$, $\dim(\gia)=n$,
where $n$ is the number of
$G$-transitive components of $A\setminus A_0$, $\dim(\lia)=n$,
where $n$ the linear dimension of the quotient space $A/A_0$, and $\dim(\aia)=n+1$,
where $n$ is the linear dimension of the quotient space $A/A_0$.
\end{theorem}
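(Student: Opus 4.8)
\emph{Overall strategy.} Any proof of Urbanik's theorem must extract the full strength of the exchange and extension properties, and the route I would take is to understand first the \emph{unary} members of the clone and then the way operations of higher arity are assembled from them. Fix a basis $B$ and an element $e\in B$, and consider the one-generated subalgebra $\lan e\ran$. By the extension property (Definition~\ref{dia}(2)), for each $a\in A$ there is an endomorphism $\rho_a$ with $\rho_a(e)=a$, and since $\A$ is free on $B$ an endomorphism is determined by its values on $B$; hence $a\mapsto\rho_a$ is well defined and the rule $x\cdot a:=\rho_a(x)$ makes $\lan e\ran$ a monoid with identity $e$, the identity and associative laws reducing through freeness to the evaluation of endomorphisms at $e$. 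Because every $\rho_a$ is an endomorphism, each clone operation $f$ satisfies $f(a_1\cdot a,\dots,a_k\cdot a)=f(a_1,\dots,a_k)\cdot a$, which is exactly the equivariance (\ref{dmiaeq0}); conversely the extension property forces every operation obeying this identity to belong to $\clf$. When $\dim\A=1$ we have $\lan e\ran=A$, and after checking that the non-units (elements $a$ with $\lan a\ran\ne A$) are precisely the left zeros of this monoid, this presents $\A$ as a monoid independence algebra $\mia$ of dimension $1$.

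\emph{Detecting additive structure.} For $\dim\A\ge 2$ the decisive dichotomy is whether the clone contains a genuinely binary term that mixes two coordinates in a cancellative, subtraction-like way. The plan is to split on this. If such a term exists, I would use the endomorphisms fixing a chosen point to manufacture a system of ``scalars'' acting on the algebra, and then prove that this system is a division ring $K$: additivity and the two distributive laws come from the mixing term together with the equivariance of the first paragraph, while invertibility comes from the automorphism part of the extension property. One then shows that $A$ is a $K$-space and that $A_0$ is exactly the set of values taken by the constant operations in the clone, so that the clone consists of the maps $\sum_i\lam_ix_i+a$ with $a\in A_0$. Whether the coefficient sums range freely or are constrained to equal $1$ separates the linear algebra $\lia$ (Definition~\ref{dlia}) from the affine algebra $\aia$ (Definition~\ref{daia}); the shift in $\dim\aia=n+1$ then records that an affine frame needs one base point beyond a basis of $A/A_0$.

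\emph{Low dimension and the no-mixing case.} The subtle point is that the division-ring axioms are only forced once there is enough room: associativity and distributivity are Desarguesian phenomena requiring dimension at least $3$. In dimension exactly $2$ the coordinatizing structure need only be a \emph{quasifield}, and I would verify axioms (i)--(iv) and identity (\ref{dqiaeq1}) directly from the surviving term identities, yielding the quasifield algebra $\qia$ of dimension $2$. If, on the other hand, no mixing binary term exists, then the equivariance collapses every clone operation to a single active variable acted on by an invertible unary term, or to a constant; the invertible unary terms then form a permutation group $G$, the constants fill a $G$-invariant set $A_0$, and one obtains a group action algebra $\gia$ (Definition~\ref{dgia}), whose dimension is the number of $G$-transitive components of $A\setminus A_0$ since a basis consists of one representative per such component.

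\emph{The exceptional algebra and the main obstacle.} The genuinely sporadic output is the four-element exceptional algebra $\eia$ (Definition~\ref{deia}), which arises when the putative scalar structure in dimension $2$ degenerates to the smallest possible configuration, carrying the fixed-point-free involution $i$ and the forced symmetric ternary term $q$; pinning it down — proving that it is the only remaining possibility and that $i$ and $q$ determine a single clone — is, together with the division-ring construction, the hard part of the proof. Throughout, the other recurring obstacle is the two-directional verification that the clone equals the advertised family: the easy inclusion is that the listed operations preserve the structure, while the difficult inclusion, that freeness admits \emph{no} further operations, is where the extension property does the essential work. The dimension formulas are then read off from the exhibited bases: one generator for $\mia$, two for $\qia$ and $\eia$, one representative per $G$-component for $\gia$, a basis of $A/A_0$ for $\lia$, and such a basis together with one affine base point for $\aia$.
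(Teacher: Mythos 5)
You should know at the outset that the paper contains no proof of Theorem~\ref{tcla} to compare against: this is Urbanik's classification theorem, imported wholesale from \cite{Ur63,Ur65,urbanik}, and the authors explicitly build their work on top of it (indeed, their Problem~10.1 asks whether their main theorem can be proved \emph{without} it). So your proposal must stand as a free-standing proof of Urbanik's theorem, and by that standard it is a roadmap, not a proof. Its skeleton is broadly faithful to the actual development --- analyze the unary part of the clone first; coordinatize via endomorphisms; split on whether the clone contains an essentially binary ``mixing'' term; get a division ring in dimension $\geq 3$ but only a quasifield in dimension $2$; collapse to a group action when no mixing term exists; isolate $\eia$ as the sporadic residue --- but every load-bearing step is asserted rather than carried out: the construction of the scalars and verification of the division-ring axioms, the ``Desarguesian'' argument that dimension $\geq 3$ forces associativity and distributivity, the derivation of quasifield axioms (i)--(iv) and identity (\ref{dqiaeq1}), the exhaustiveness of the case split (why can nothing else occur in dimension $\geq 3$ when a mixing term is present?), and the pinning down of $\eia$, which you yourself defer as ``the hard part.'' These steps occupy three substantial papers of Urbanik; naming them is not closing them.

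Beyond incompleteness, two concrete points would fail as written. First, your claim that ``the extension property forces every operation obeying this identity to belong to $\clf$'' is too strong: Definition~\ref{dmia} requires $F$ to contain only all \emph{unary} operations satisfying (\ref{dmiaeq0}); the equivariant operations of higher arity are merely an upper bound for the clone, and distinct monoid independence algebras on the same monoid $A$ may realize different subfamilies of them, so equivariance does not characterize clone membership. Second, the map $a\mapsto\rho_a$ is not well defined when $\dim\A\geq2$: freeness determines an endomorphism only once its values on \emph{all} of $B$ are prescribed, and the extension property pins down only $\rho_a(e)=a$. The product $x\cdot a:=\rho_a(x)$ is choice-independent only for $x\in\lan e\ran$ (write $x=t(e)$ for a unary term $t$; then $\rho_a(x)=t(a)$), and your argument should say so explicitly. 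Relatedly, your identification of the non-units of this monoid with the non-generators silently uses the $v^*$-convention that $\lan\emptyset\ran$ consists of the algebraic constants --- exactly the convention subtlety the surrounding paper is at pains to discuss --- since under the other convention every singleton is independent and the bookkeeping of bases in the monoid case breaks. The dimension statements at the end are fine, but they are the easy part: they are read off from exhibited bases once the classification is established, which your sketch has not done.
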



\section{Monoid independence algebras}
\label{scia}
In this section, $\mia$ will denote a finite-dimensional monoid independence algebra
(see Definition~\ref{dmia}). The dimension of $\mia$ is $1$, with each invertible element of the monoid $A$
forming a basis for $\mia$ \cite[page~242]{urbanik}.
We will determine the size of a largest subsemilattice of $\End(\mia)$.

\begin{lemma}
\label{la1}
Let $\al\in\End(\mia)$. Then $\al(a)=a\al(1)$ for every $a\in A$.
\end{lemma}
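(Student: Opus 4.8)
We have a monoid independence algebra $\mia = \langle A; F \rangle$ where:
- $A$ is a monoid where every non-unit is a left zero
- Every $f \in F$ is $k$-ary ($k \geq 1$) satisfying $f(a_1a, \ldots, a_ka) = f(a_1,\ldots,a_k)a$
- $F$ contains ALL unary operations satisfying this, which are exactly $f(x) = bx$ for $b \in A$

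We want to prove: if $\alpha \in \End(\mia)$, then $\alpha(a) = a\alpha(1)$ for all $a \in A$.

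**Key observations:**

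Since $F$ contains all unary operations $f(x) = bx$ for every $b \in A$, and $\alpha$ is an endomorphism, $\alpha$ must preserve all these operations.

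So for each $b \in A$, consider $f_b(x) = bx$. Since $\alpha$ preserves $f_b$:
$$\alpha(f_b(a)) = f_b(\alpha(a))$$
$$\alpha(ba) = b\alpha(a)$$

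This holds for all $a, b \in A$.

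**Using this:**

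Setting $a = 1$:
$$\alpha(b \cdot 1) = b\alpha(1)$$
$$\alpha(b) = b\alpha(1)$$

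That's exactly what we want!

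Wait, let me double-check. We have $\alpha(ba) = b\alpha(a)$ for all $a, b$. Setting $a = 1$ gives $\alpha(b) = b\alpha(1)$.

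So the proof is: Since $F$ contains the unary operation $f(x) = bx$ for each $b \in A$, and $\alpha$ preserves it, we get $\alpha(bx) = b\alpha(x)$ for all $x$. Setting $x = 1$ (the identity of the monoid) yields $\alpha(b) = b\alpha(1)$.

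Let me verify the reasoning is sound. The identity $1$ of the monoid is a unit (the unique unit since non-units are left zeros). The operation $f_b(x) = bx$ is in $F$. Endomorphisms preserve $F$. So $\alpha(f_b(x)) = f_b(\alpha(x))$, i.e., $\alpha(bx) = b\alpha(x)$. Set $x=1$: $\alpha(b\cdot 1) = b\alpha(1)$, so $\alpha(b) = b\alpha(1)$.

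---

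The plan is to exploit the defining property of a monoid independence algebra that $F$ contains every unary operation satisfying~(\ref{dmiaeq0}), together with the fact noted in Definition~\ref{dmia} that these are precisely the operations of the form $f(x)=bx$ for $b\in A$. Since $1$ is the identity of the monoid $A$, the element $\alpha(1)\in A$ is well defined, and the goal $\alpha(a)=a\alpha(1)$ is an identity to be verified coordinate-wise.

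First I would fix an arbitrary $b\in A$ and consider the unary operation $f_b$ defined by $f_b(x)=bx$. By Definition~\ref{dmia}, $f_b$ belongs to $F$ (it is unary and satisfies~(\ref{dmiaeq0}), since $f_b(xa)=bxa=f_b(x)a$). Because $\alpha$ is an endomorphism, it preserves $f_b$, which gives
\[
\alpha(bx)=\alpha(f_b(x))=f_b(\alpha(x))=b\,\alpha(x)\qquad\text{for all }x\in A.
\]
This single relation, valid for every $b\in A$, is the crux of the argument.

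The final step is simply to specialize $x=1$ in the displayed identity. Renaming $b$ as $a$, this yields $\alpha(a)=\alpha(a\cdot 1)=a\,\alpha(1)$, which is exactly the desired conclusion. I do not expect any real obstacle here: the entire content is packaged into the defining requirement that $F$ contain \emph{all} unary operations of the form $x\mapsto bx$, so the preservation condition for endomorphisms does all the work, and the only thing to be careful about is that $1$ denotes the monoid identity (a unit, hence not a left zero) so that the evaluation at $x=1$ is legitimate.
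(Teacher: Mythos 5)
Your proof is correct and follows essentially the same route as the paper: both arguments observe that the unary operation $f(x)=bx$ lies in $F$ by Definition~\ref{dmia}, apply the preservation condition $\al(f(x))=f(\al(x))$, and evaluate at the monoid identity $1$ to obtain $\al(b)=b\,\al(1)$. The paper's version is just a one-line instance of your computation (with $b$ written as $a$), so there is nothing to add.
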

\begin{proof}
Let $a\in A$. Since $\al$ preserves the operation $f(x)=ax$,
we have $\al(a)=\al(a1)=\al(f(1))=f(\al(1))=a\al(1)$.
\end{proof}
\begin{lemma}
\label{lexx}
Let $\vep\in\End(\mia)$ be an idempotent such that $\vep(1)$ is invertible. Then $\vep(1)=1$.
\end{lemma}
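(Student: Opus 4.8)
The plan is to exploit the explicit description of endomorphisms of $\mia$ provided by Lemma~\ref{la1}, which says that any $\al\in\End(\mia)$ acts by $\al(a)=a\al(1)$; thus an endomorphism is entirely determined by its value at the identity $1$ of the monoid $A$. Writing $b=\vep(1)$, the idempotency of $\vep$ should translate, after one application of Lemma~\ref{la1}, into a purely algebraic identity about $b$ in the monoid $A$, and the invertibility hypothesis should then let me cancel down to $b=1$.

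Concretely, the first step is to compute $\vep(\vep(1))$ in two ways. On the one hand, since $\vep$ is idempotent we have $\vep\circ\vep=\vep$, so $\vep(\vep(1))=\vep(1)$. On the other hand, applying Lemma~\ref{la1} with $a=\vep(1)$ gives
\[
\vep(\vep(1))=\vep(1)\,\vep(1)=\vep(1)^2.
\]
Combining the two evaluations yields the idempotent equation $\vep(1)^2=\vep(1)$ in the monoid $A$.

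The final step is to use the hypothesis that $\vep(1)$ is invertible: multiplying the identity $\vep(1)^2=\vep(1)$ on the left (or right) by $\vep(1)^{-1}$ immediately gives $\vep(1)=1$, as required. There is really no serious obstacle here; the only point deserving attention is to make sure Lemma~\ref{la1} is invoked with the correct argument (namely $a=\vep(1)$, not $a=1$) so that the idempotency of $\vep$ is genuinely converted into an idempotency statement about the element $\vep(1)$ of $A$, after which invertibility forces it to equal the unit. One could equivalently run the computation through a general element, deriving $ab^2=ab$ for all $a\in A$ and then specializing to $a=1$, but evaluating directly at $1$ is the cleanest route.
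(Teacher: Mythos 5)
Your proof is correct and matches the paper's argument exactly: both evaluate $\vep(\vep(1))$ via idempotency on one hand and via Lemma~\ref{la1} (with $a=\vep(1)$) on the other, obtaining $\vep(1)\vep(1)=\vep(1)$, and then cancel using invertibility. No differences worth noting.
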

\begin{proof}
By Lemma~\ref{la1}, $\vep(1)=\vep(\vep(1))=\vep(1)\vep(1)$, and so $\vep(1)=1$ since $\vep(1)$ is invertible.
\end{proof}
\begin{lemma}
\label{lhub}
Let $\vep_1,\vep_2\in\End(\mia)$ be commuting idempotents different from the identity.
Then $\vep_1=\vep_2$.
\end{lemma}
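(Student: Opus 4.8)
The plan is to exploit Lemma~\ref{la1}, which reduces every endomorphism of $\mia$ to a single datum: its value at the monoid identity $1$. Writing $b_1=\vep_1(1)$ and $b_2=\vep_2(1)$, Lemma~\ref{la1} tells us that $\vep_i$ is exactly the right-translation $a\mapsto ab_i$, so to prove $\vep_1=\vep_2$ it suffices to prove $b_1=b_2$. The whole argument will therefore be carried out inside the monoid $A$ rather than in $\End(\mia)$.

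First I would pin down the nature of $b_1$ and $b_2$. Since each $\vep_i$ is an idempotent different from the identity, I claim $b_i$ is a non-unit. Indeed, if $b_i=\vep_i(1)$ were invertible, then Lemma~\ref{lexx} would force $\vep_i(1)=1$, and Lemma~\ref{la1} would then give $\vep_i(a)=a1=a$ for all $a$, that is $\vep_i=\id$, contrary to hypothesis. Hence $b_1$ and $b_2$ are non-units, and by the defining property of a monoid independence algebra (Definition~\ref{dmia}) every non-unit of $A$ is a left zero; thus $b_1a=b_1$ and $b_2a=b_2$ for all $a\in A$.

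Next I would translate the commuting hypothesis into a multiplicative identity. Since $\vep_1$ and $\vep_2$ commute, the maps $\vep_1\vep_2$ and $\vep_2\vep_1$ coincide; evaluating at $1$ and applying Lemma~\ref{la1} twice gives $\vep_1(b_2)=b_2b_1$ on one side and $\vep_2(b_1)=b_1b_2$ on the other, so $b_1b_2=b_2b_1$. Now the left-zero property collapses this equation: $b_1b_2=b_1$ and $b_2b_1=b_2$, whence $b_1=b_2$, and therefore $\vep_1=\vep_2$ by Lemma~\ref{la1}.

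The calculations here are short, so the real work is conceptual rather than computational. The point that needs care — and the step I would treat as the crux — is seeing that the commuting hypothesis is genuinely necessary and is deployed in exactly the right place. If $z_1\ne z_2$ are two distinct left zeros of $A$, the corresponding idempotent endomorphisms $a\mapsto az_1$ and $a\mapsto az_2$ do \emph{not} commute (composing them in the two orders yields the maps $a\mapsto az_1$ and $a\mapsto az_2$, which differ), so non-identity idempotents of $\End(\mia)$ need not be equal in general. What rescues the lemma is precisely that commutativity, read through Lemma~\ref{la1}, forces $b_1b_2=b_2b_1$, and two left zeros can satisfy this only when they coincide. Everything else is a direct appeal to Lemmas~\ref{la1} and~\ref{lexx}.
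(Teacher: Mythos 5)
Your proof is correct and follows essentially the same route as the paper's: both reduce everything via Lemma~\ref{la1} to the values $\vep_i(1)$, use Lemma~\ref{lexx} to conclude these are non-units and hence left zeros, and then evaluate the commuting relation at $1$ to force $\vep_1(1)=\vep_2(1)$. The only cosmetic difference is that you first record $b_1b_2=b_2b_1$ and then collapse each side by the left-zero property, whereas the paper collapses the two composites directly; your closing remark that distinct left zeros yield non-commuting idempotents is a correct sanity check but not needed for the argument.
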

\begin{proof}
Since $\vep_1,\vep_2\ne\id_A$, $\vep_1(1)$ and $\vep_2(1)$ are not invertible by Lemmas~\ref{la1} and~\ref{lexx}.
Thus, $\vep_1(1)$ and $\vep_2(1)$ are left zeros in the monoid $A$, and so, by Lemma~\ref{la1},
$(\vep_1\vep_2)(1)=\vep_1(\vep_2(1))=\vep_2(1)\vep_1(1)=\vep_2(1)$. Similarly, $(\vep_2\vep_1)(1)=\vep_1(1)$,
and so $\vep_1(1)=\vep_2(1)$ since $\vep_1\vep_2=\vep_2\vep_1$.
Hence, for every $a\in A$, $\vep_1(a)=a\vep_1(1)=a\vep_2(1)=\vep_2(a)$, and so $\vep_1=\vep_2$.
\end{proof}

\begin{theorem}\label{tmia}
Let $\mia$ be a monoid independence algebra, and let $E$ be a largest subsemilattice
of $\End(\mia)$. Then $|E|=1$ if the monoid $A$ is a group, and $|E|=2$ if $A$ is not a group.
\end{theorem}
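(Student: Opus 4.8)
The plan is to determine all idempotents of $\End(\mia)$ and then use the commuting condition to cap the size of a subsemilattice. First I would record that, by Lemma~\ref{la1}, an endomorphism $\al$ is completely determined by the single value $\al(1)$, and a short computation identifies the idempotents: applying Lemma~\ref{la1} twice gives $\al(\al(a))=a\,\al(1)\al(1)$, so $\al\al=\al$ holds for all $a$ exactly when $\al(1)\al(1)=\al(1)$, i.e. when $\al(1)$ is an idempotent of the monoid $A$. Since every non-unit of $A$ is a left zero (hence idempotent) and the only idempotent unit is $1$, the idempotents of $\End(\mia)$ correspond, via $\al\mapsto\al(1)$, to $\{1\}$ together with the non-units of $A$.

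For the upper bound $|E|\le 2$, which holds in both cases, I would argue directly from Lemma~\ref{lhub}. A subsemilattice is a set of pairwise commuting idempotents, so any two of its elements that are both different from $\id_A$ must coincide. Hence $E$ contains at most one idempotent other than $\id_A$, and therefore $|E|\le 2$.

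If $A$ is a group, every element of $A$ is a unit, so for any idempotent $\vep\in\End(\mia)$ the element $\vep(1)$ is invertible; Lemma~\ref{lexx} then forces $\vep(1)=1$, whence $\vep=\id_A$ by Lemma~\ref{la1}. Thus $\id_A$ is the unique idempotent, the only subsemilattice is $\{\id_A\}$, and $|E|=1$. If $A$ is not a group, fix a non-unit $z\in A$; by Definition~\ref{dmia} it is a left zero, so $zz=z$. Since $\{1\}$ is a basis of $\mia$, the extension property (condition (2) of Definition~\ref{dia}) supplies an endomorphism $\vep$ with $\vep(1)=z$, and the computation above shows $\vep$ is idempotent while $\vep\ne\id_A$ because $\vep(1)=z\ne 1$. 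As $\id_A$ commutes with and absorbs under composition with everything, $\{\id_A,\vep\}$ is a subsemilattice of size $2$, which together with the upper bound yields $|E|=2$.

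I expect no serious obstacle, since the three preceding lemmas carry the real weight; the only points needing care are the idempotency computation (resting on Lemma~\ref{la1} and $zz=z$) and the observation that pairwise commutativity, through Lemma~\ref{lhub}, collapses all non-identity idempotents of $E$ into a single element, which is precisely what pins $|E|$ at $2$.
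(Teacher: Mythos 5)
Your proposal is correct and follows essentially the same route as the paper: Lemma~\ref{lhub} caps $|E|$ at $2$, Lemmas~\ref{la1} and~\ref{lexx} force every idempotent to be $\id_A$ when $A$ is a group, and a two-element subsemilattice $\{\id_A,\vep\}$ settles the non-group case. The only cosmetic difference is that you obtain the witness idempotent with $\vep(1)=z$ from the extension property applied to the basis $\{1\}$ (plus the correct observation that $\al$ is idempotent iff $\al(1)\al(1)=\al(1)$), whereas the paper simply defines $\vep(a)=ac_0$ for a non-unit $c_0$ and verifies directly that it is an idempotent endomorphism; both are valid.
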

\begin{proof}
Suppose that $A$ is a group. Let $\vep\in\End(\mia)$ be an idempotent. Since $A$ is a group,
$\vep(1)$ is invertible, and so $\vep(1)=1$ by Lemma~\ref{lexx}. Thus, by Lemma~\ref{la1},
$\vep(a)=a\vep(1)=a1=a$, and so $\vep$ is the identity. It follows that $|E|=1$.

Suppose that $A$ is not a group. First, $|E|\leq2$ by Lemma~\ref{lhub}. Next,
fix a non-invertible element $c_0\in A$ and define $\vep:A\to A$ by $\vep(a)=ac_0$.
It is then straightforward to check that $\vep$ is an idempotent endomorphism of $\mia$.
Thus $L=\{\id_A,\vep\}$ is a subsemilattice of $\End(\mia)$, and so $|E|\geq|L|=2$. Hence $|E|=2$.
\end{proof}

\section{Quasifield independence algebras}
\label{smia}
In this section, $\qia$ will denote a finite-dimensional quasifield independence algebra
(see Definition~\ref{dqia}). The dimension of $\qia$ is $2$, with any two distinct elements of the quasifield $A$
forming a basis for $\qia$ \cite[page~243]{urbanik}.
We will determine the size of a largest subsemilattice of $\End(\qia)$.

It easily follows from the axioms for a quasifield that for all $a,b\in A$,
\begin{equation}
\label{eqf}
a=b\miff a-b=0.
\end{equation}
\begin{lemma}
\label{lcon}
Let $\al:A\to A$ be a constant transformation. Then $\al\in\End(\qia)$.
\end{lemma}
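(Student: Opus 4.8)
The goal is to show that every constant transformation $\al:A\to A$ is an endomorphism of $\qia$. Recall that $\A=\qia$ is defined so that its fundamental operations $F$ consist of all $k$-ary operations $f$ (with $k\geq 1$) satisfying the functional equation (\ref{dqiaeq1}), namely $f(a-ba_1,\ldots,a-ba_k)=a-bf(a_1,\ldots,a_k)$. Thus to verify that $\al$ is an endomorphism, I must check that $\al$ commutes with every such $f$: for each $f\in F$ and all $x_1,\ldots,x_k\in A$, I need $\al(f(x_1,\ldots,x_k))=f(\al(x_1),\ldots,\al(x_k))$.

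The plan is to exploit the constancy of $\al$. Write $\al(x)=d$ for all $x\in A$, where $d\in A$ is fixed. Then the left-hand side is simply $\al(f(x_1,\ldots,x_k))=d$, regardless of the arguments. The right-hand side is $f(\al(x_1),\ldots,\al(x_k))=f(d,d,\ldots,d)$, so the whole claim reduces to the single identity $f(d,\ldots,d)=d$ for every $f\in F$. In other words, I must show that every fundamental operation of a quasifield independence algebra fixes the constant tuples, i.e.\ each $f\in F$ is \emph{idempotent} on the diagonal.

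To establish $f(d,\ldots,d)=d$, I would feed a suitable substitution into the defining equation (\ref{dqiaeq1}). The natural choice is to pick the arguments $a_1,\ldots,a_k$ and the parameters $a,b$ so that each slot $a-ba_i$ collapses to $d$. The cleanest route uses the quasifield axioms: take $b$ to be the multiplicative identity $1$ of the group $A\setminus\{0\}$ and each $a_i=0$; since $a\cdot 0=0$ in a quasifield, we get $a-ba_i=a-1\cdot 0=a-0=a$ by axiom~(i), so setting $a=d$ makes every argument equal to $d$. With these choices the left-hand side of (\ref{dqiaeq1}) becomes $f(d,\ldots,d)$, while the right-hand side becomes $d-1\cdot f(0,\ldots,0)$. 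This reduces the problem to computing $f(0,\ldots,0)$ and then simplifying $d-f(0,\ldots,0)$; a parallel substitution (e.g.\ $a=0$, $b=1$, all $a_i=0$) should force $f(0,\ldots,0)=0$, after which axiom~(i) gives $d-0=d$, as desired. I should double-check the degenerate possibility $b=0$ and make sure the substitutions stay inside the group of units where the axioms apply.

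The main obstacle will be the bookkeeping in the quasifield axioms, which are noncommutative and somewhat unusual (especially axiom~(iv), valid only when $a\neq b$). I expect the delicate point to be verifying $f(0,\ldots,0)=0$ cleanly and ensuring the chosen parameters avoid the excluded cases, so that (\ref{dqiaeq1}) is legitimately applicable. Once the diagonal-idempotence $f(d,\ldots,d)=d$ is secured for all $f\in F$, the conclusion $\al\in\End(\qia)$ follows immediately, with no further appeal to the structure of $\al$ beyond its constancy.
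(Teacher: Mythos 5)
Your reduction of the lemma to the diagonal identity $f(d,\ldots,d)=d$ for every fundamental operation $f$ is exactly right, and it is the same first step the paper takes. The gap is in how you propose to establish that identity. With your choice $a=d$, $b=1$, $a_i=0$, equation (\ref{dqiaeq1}) gives $f(d,\ldots,d)=d-f(0,\ldots,0)$, so everything hinges on the auxiliary fact $f(0,\ldots,0)=0$; but your ``parallel substitution'' $a=0$, $b=1$, $a_i=0$ yields only $f(0,\ldots,0)=0-f(0,\ldots,0)$, and the relation $e=0-e$ does \emph{not} force $e=0$ in a quasifield. Indeed, every field with its usual subtraction is a quasifield, and in characteristic $2$ one has $0-e=-e=e$ for \emph{every} $e$, so your second substitution carries no information there. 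Moreover, axiom (iii) with $a=0$ shows that $e\mapsto 0-e$ is an involution, so iterating your identity cannot help either. As written, the step ``should force $f(0,\ldots,0)=0$'' genuinely fails.

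The repair is precisely the substitution you deliberately avoided: $b=0$. Your worry about ``staying inside the group of units'' is misplaced --- (\ref{dqiaeq1}) is postulated for \emph{all} $a,b,a_1,\ldots,a_k\in A$ with no restriction on $b$ (only the quasifield axiom (iv) excludes a case, namely $a=b$, and it is never invoked here). Taking $a=d$, $b=0$, $a_i=d$, and using $0x=x0=0$ together with axiom (i), one gets in a single line $f(d,\ldots,d)=f(d-0d,\ldots,d-0d)=d-0f(d,\ldots,d)=d-0=d$, which is exactly the paper's proof. Alternatively, the same choice $b=0$ with $a=0$ instantly gives your missing fact $f(0,\ldots,0)=0-0=0$ and thereby rescues your two-step route; but the $b=0$ substitution makes the intermediate computation of $f(0,\ldots,0)$ unnecessary in the first place.
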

\begin{proof}
Suppose $\al(x)=c$ for every $x\in A$, where $c\in A$.
Let $f$ be an operation in $\qia$. Then, by the definition of $\qia$, $f$ is a $k$-ary operation ($k\geq1$) such that
$f(a-ba_1,\ldots,a-ba_k)=a-bf(a_1,\ldots,a_k)$ for all $a,b,a_1,\ldots,a_k\in A$. Then
\[
f(\al(x_1),\ldots,\al(x_k))=f(c,\ldots,c)=f(c-0c,\ldots,c-0c)=c-0f(c,\ldots,c)=c=\al(f(x_1,\ldots,x_k)).
\]
Hence $\al$ preserves $f$, and so $\al\in\End(\qia)$.
\end{proof}

\begin{theorem}\label{tqia}
Let $\qia$ be a quasifield independence algebra, and let $E$ be a largest subsemilattice
of $\End(\qia)$. Then $|E|=2$.
\end{theorem}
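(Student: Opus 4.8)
The plan is to establish the lower bound by exhibiting an explicit two-element subsemilattice and to obtain the matching upper bound by completely classifying the idempotents of $\End(\qia)$. For the lower bound, fix any $c\in A$ and let $\al_c$ denote the constant map with value $c$. By Lemma~\ref{lcon} we have $\al_c\in\End(\qia)$, and $\al_c$ is patently idempotent. Since $|A|\ge2$ there is some $x\ne c$, so $\al_c\ne\id_A$, and $\{\id_A,\al_c\}$ is a commuting set of idempotents, hence a subsemilattice. Thus $|E|\ge2$, and everything reduces to showing $|E|\le2$.

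For the upper bound I would first show that every idempotent $\vep\in\End(\qia)$ is either $\id_A$ or a constant map. The key preliminary observation is that every singleton is a subalgebra, that is, $\lan c\ran=\{c\}$ for each $c\in A$: taking $b=0$ in the defining identity (\ref{dqiaeq1}) and using axiom (i), every clone operation $f$ satisfies $f(c,\ldots,c)=c-0\cdot f(c,\ldots,c)=c$, which is exactly the computation already carried out in the proof of Lemma~\ref{lcon}. Consequently the one-dimensional subalgebras of $\qia$ are precisely the singletons.

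Now let $\vep$ be an idempotent endomorphism. Its image $\vep(A)$ is a subalgebra on which $\vep$ acts as the identity, so $\vep$ is completely determined by $\vep(A)$. Since $\qia$ has no nullary operations, $\lan\emptyset\ran=\emptyset$, so the nonempty set $\vep(A)$ has dimension $1$ or $2$ (recall $\dim(\qia)=2$ by Theorem~\ref{tcla}). If $\dim(\vep(A))=2$, then a basis of $\vep(A)$ is an independent set of size $2$ in $\qia$, hence a basis of $\qia$; therefore $\vep(A)=A$ and $\vep=\id_A$. If $\dim(\vep(A))=1$, then by the previous paragraph $\vep(A)=\{c\}$ for some $c$, so $\vep=\al_c$. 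This establishes the classification of idempotents.

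Finally I would assemble these facts. For distinct $c,d\in A$ the constants fail to commute, since $\al_c\al_d=\al_c\ne\al_d=\al_d\al_c$, whereas $\id_A$ commutes with everything. Hence any subsemilattice of $\End(\qia)$ contains at most one constant together with, possibly, the identity, giving $|E|\le2$. Combined with the lower bound this yields $|E|=2$. The only substantive step is the classification of idempotents, and within it the identity $\lan c\ran=\{c\}$; once this is in hand, the remaining argument about commuting constants is routine bookkeeping.
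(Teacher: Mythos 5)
Your proposal is correct, but your upper bound travels a genuinely different route from the paper's. The paper proves directly that every idempotent $\vep\ne\id_A$ is constant by a quasifield computation: picking $a$ with $\vep(a)\ne a$ and arbitrary $b\ne a$, it solves $a-b=(a-\vep(a))c$ using the invertibility of nonzero elements, applies $\vep$ to the binary clone operation $f(x,y)=x-(x-y)c$, and invokes axioms (i) and (iii) to get $\vep(b)=\vep(a)$; no dimension theory is used at all. You instead classify idempotents structurally: the image of an idempotent is a pointwise-fixed subalgebra; singletons are subalgebras (your computation with $b=0$ in (\ref{dqiaeq1}) is exactly the one inside the paper's Lemma~\ref{lcon}, just repurposed); and since $\lan\emptyset\ran=\emptyset$ and $\dim(\qia)=2$, the image has dimension $1$ (a singleton, so $\vep$ is constant) or $2$ (an independent pair is a basis, so $\vep=\id_A$). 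Both proofs then finish identically, observing that distinct constants fail to commute (your computation $\al_c\al_d=\al_c$ matches the paper's composition convention). The trade-off: the paper's argument is self-contained in the quasifield axioms and pinpoints the identity that forces collapse, whereas yours leans on the matroid machinery and on $\dim(\qia)=2$ (equivalently, Urbanik's fact that any two distinct elements form a basis, cited at the start of the section), and in exchange is more conceptual --- it shows the result holds verbatim for any two-dimensional independence algebra without nullary operations in which every singleton is a subalgebra. Your implicit uses of the facts that generation and independence are absolute for subalgebras, and that an independent $2$-set in a $2$-dimensional algebra is a basis, are standard and consistent with Proposition~\ref{506}, so I see no gap.
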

\begin{proof}
Let $\vep\in\End(\qia)$ be an idempotent other then the identity. Then, there is
$a\in A$ such that $a \ne \vep(a)$. We claim that $\vep$ is a constant transformation.

Let $b\in A$ be any element such that $b\ne a$. Our objective is to show that $\vep(b)=\vep(a)$.
By (\ref{eqf}), $a-b\ne0$ and $a-\vep(a)\ne0$, which implies
that there exists $c\in A \setminus \{0\}$ such that $a-b = (a-\vep(a))c$.
Consider the operation $f(x,y)= x-(x-y)c$ of $\qia$. Then
$f(a,\vep(a))=a-(a-\vep(a))c=a-(a-b)=b$, and so, since $\vep$ is an idempotent and $\vep$ preserves $f$,
\[
\vep(b)=\vep(f(a,\vep(a)))=f(\vep(a),\vep(\vep(a)))=f(\vep(a),\vep(a))=\vep(a)-(\vep(a)-\vep(a))c=\vep(a)-0c=\vep(a).
\]
We have proved that $\vep$ is a constant transformation. It follows that $|E|\leq2$.
Since $|A|\geq2$, there is a constant transformation $\vep$ of $A$ such that $\vep\ne\id_A$.
Then $L=\{\id_A,\vep\}$ is a subsemilattice of $\qia$, and so $|E|\geq|L|=2$. Hence $|E|=2$.
\end{proof}

\section{The exceptional independence algebra}
Let $\eia=(A,i,q)$ be the exceptional independence algebra
(see Definition~\ref{deia}). The dimension of $\eia$ is $2$, with any two distinct elements $x,y\in A$
such that $y\ne i(x)$
forming a basis for $\eia$ \cite[page~244]{urbanik}.

\begin{theorem}
\label{teia}
Let $\eia$ be the exceptional independence algebra, and let $E$ be a largest subsemilattice
of $\End(\eia)$. Then $|E|=2$.
\end{theorem}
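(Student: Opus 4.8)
The plan is to mirror the strategy of Theorems~\ref{tmia} and~\ref{tqia}: first prove the analogue of Lemma~\ref{lhub}, namely that any two commuting idempotents of $\End(\eia)$ other than $\id_A$ coincide, which forces $|E|\le 2$; then exhibit a single non-identity idempotent to obtain $|E|\ge 2$. Since $|A|=4$, everything can be pinned down by a complete description of the subalgebras and idempotents.

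First I would record two structural facts. As $i$ is a fixed-point-free involution, $A$ splits into two \emph{$i$-pairs} $\{c,i(c)\}$ and $\{d,i(d)\}$, and every endomorphism $\al$ is $i$-equivariant, $\al(i(x))=i(\al(x))$, because $i\in F$. Next I would determine the subalgebras. Each $i$-pair $\{c,i(c)\}$ is a subalgebra: it is visibly closed under $i$, and closure under $q$ follows by checking the four size-$3$ input multisets drawn from $\{c,i(c)\}$ against the symmetry of $q$ and the defining relations $q(x,y,x)=x$ and $q(x,y,i(x))=y$. Conversely, any subalgebra meeting both pairs contains elements $c,d$ with $d\notin\{c,i(c)\}$, hence contains the basis $\{c,d\}$ and equals $A$. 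Thus the only proper nonempty subalgebras are the two $i$-pairs.

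With this in hand the idempotents are easy to list. A non-identity idempotent $\vep$ is the identity on its image, and that image is a proper nonempty subalgebra (it cannot be $A$, since a surjective idempotent of a finite set is bijective and hence the identity). So the image is an $i$-pair $B=\{c,i(c)\}$ on which $\vep$ fixes both points, while on the complementary pair the value $\vep(d)\in B$ may be either $c$ or $i(c)$, after which $\vep(i(d))=i(\vep(d))$ is forced; this gives exactly two idempotents per image pair, four in all. To prove the Lemma~\ref{lhub} analogue, take commuting idempotents $\vep_1,\vep_2\ne\id_A$. If their image pairs differ, then $\vep_1\vep_2$ takes values in $\mathrm{im}(\vep_1)$ while $\vep_2\vep_1$ takes values in $\mathrm{im}(\vep_2)$, two disjoint pairs, so the composites disagree everywhere. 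If the image pairs coincide but $\vep_1\ne\vep_2$, they differ at some point $d$ of the complementary pair, and then $\vep_1\vep_2(d)=\vep_1(\vep_2(d))=\vep_2(d)$ while $\vep_2\vep_1(d)=\vep_1(d)$, the two distinct points of the common image. In either case $\vep_1\vep_2\ne\vep_2\vep_1$ unless $\vep_1=\vep_2$, so $E$ contains $\id_A$ and at most one further idempotent, whence $|E|\le 2$.

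Finally, fixing one $i$-pair pointwise and sending the complementary pair onto it by $d\mapsto c$ and $i(d)\mapsto i(c)$ defines, via the extension property applied to the basis $\{c,d\}$, a non-identity idempotent $\vep$; then $\{\id_A,\vep\}$ is a subsemilattice, so $|E|\ge 2$ and therefore $|E|=2$. The only genuine computations are the closure check for the $i$-pairs under $q$ and the two short composite evaluations giving non-commutativity. I expect the closure verification to be the main (if routine) obstacle, as it is the one place where the specific identities defining $q$ are actually invoked.
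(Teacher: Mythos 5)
Your proof is correct, and its skeleton is the same as the paper's: use $i$-equivariance to classify the idempotents of $\End(\eia)$, show that two distinct non-identity idempotents never commute (so $|E|\leq2$), and exhibit one non-identity idempotent (so $|E|\geq2$). The differences are in execution, and two of them are worth recording. Where the paper writes down the five candidate idempotents explicitly in tabular form and declares the pairwise commutation analysis ``easy to check,'' you argue conceptually: the image of a non-identity idempotent is a nonempty proper subalgebra, hence -- being closed under the fixed-point-free involution $i$ and not all of $A$ -- a single $i$-pair, and non-commutation then follows either from disjointness of the two image pairs or, when the pairs coincide, from evaluating both composites at a point $d$ of the complementary pair where the maps differ. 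More substantially, for the lower bound the paper verifies directly that its chosen $\vep$ preserves both $i$ and $q$, whereas you obtain the idempotent from the extension property applied to the basis $\{c,d\}$ (legitimate, since the section preamble records, citing Urbanik, that any two distinct $x,y$ with $y\neq i(x)$ form a basis of $\eia$); $i$-equivariance then pins the extension down to the intended map, and preservation of $q$ comes for free. A pleasant consequence you appear not to have noticed: the computation you single out as ``the main (if routine) obstacle,'' namely the closure of the $i$-pairs under $q$, is never actually used. Your upper bound needs only that the image of an idempotent is a proper $i$-closed subalgebra (so equal to an $i$-pair), and your lower bound rests on the extension property; whether the $i$-pairs are subalgebras in full is immaterial, so your argument, unlike the paper's, never touches the defining identities of $q$ at all.
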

\begin{proof}
Let $A=\{a,b,c,d\}$. We may assume that the involution $i$, written in cycle notation,
is $i=(a\,b)(c\,d)$. Let $\vep\in\End(\eia)$ be an idempotent. Then $\vep$ preserves $i$,
and so $i(\vep(a))=\vep(i(a))=\vep(b)$. Thus $(\vep(a)\,\vep(b))$ is a cycle in $i$. Similarly,
$(\vep(c)\,\vep(d))$ is also a cycle in $i$. Moreover, since $\vep$ is an idempotent,
if $(x\,y)$ is a cycle in $i$ such that $x,y\in\ima(\vep)$, then $\vep(x)=x$ and $\vep(y)=y$. It follows
that $\vep$ must be one of the following transformations of $A$:
\begin{equation}
\label{teiaeq1}
\begin{pmatrix}a&b&c&d\\a&b&c&d\end{pmatrix},\,\,
\begin{pmatrix}a&b&c&d\\a&b&a&b\end{pmatrix},\,\,
\begin{pmatrix}a&b&c&d\\a&b&b&a\end{pmatrix},\,\,
\begin{pmatrix}a&b&c&d\\c&d&c&d\end{pmatrix},\,\,\mbox{or}\,\,
\begin{pmatrix}a&b&c&d\\d&c&c&d\end{pmatrix}.
\end{equation}
It is easy to check that if $\vep_1$ and $\vep_2$ are two distinct commuting idempotents that occur in (\ref{teiaeq1}),
then either $\vep_1$ or $\vep_2$ must be the identity. It follows that $|E|\leq2$.
Since $\vep=\begin{pmatrix}a&b&c&d\\a&b&a&b\end{pmatrix}$ preserves both $i$ and $q$,
$L=\{\id_A,\vep\}$ is a subsemilattice of $\End(\eia)$, and so $|E|\geq|L|=2$. Hence $|E|=2$.
\end{proof}

\section{Group action independence algebras}
In this section, $\gia=(A,A_0,G)$ will denote a finite-dimensional group action independence algebra
(see Definition~\ref{dgia}). We fix a cross-section $X$ of the set of
$G$-transitive components of $A\setminus A_0$. By \cite[page~244]{urbanik}, $X$ is a basis for $\gia$, so
$\gia$ has dimension $|X|$.
We will determine the size of a largest subsemilattice of $\End(\gia)$.

Denote by $\A_1=\lan A;F_1\ran$ the  algebra such that $F_1$ consists of the unary operations $g$,
where $g\in G$, and all constant transformations $f(x)=a$, where $a\in A_0$. Then $\A$ and $\A_1$ are equivalent.

\begin{lemma}\label{lsua}
The subalgebras of $\A_1$ are the sets $G(Y)\cup A_0$, where $Y\subseteq X$.
\end{lemma}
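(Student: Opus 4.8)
The plan is to establish the two inclusions separately: first that every set $G(Y)\cup A_0$ with $Y\subseteq X$ is a subalgebra of $\A_1$, and then that every subalgebra has this form. The structural fact underlying both directions is that, since $G$ is a group of permutations and $g(A_0)\subseteq A_0$ for every $g\in G$ (condition (ii) of Definition~\ref{dgia}), each $g$ restricts to a bijection of $A_0$ and hence also of $A\setminus A_0$; thus both $A_0$ and $A\setminus A_0$ are $G$-invariant, the $G$-transitive components of $A\setminus A_0$ are exactly the $G$-orbits, and since $X$ is a cross-section of these components we have the disjoint decomposition $A\setminus A_0=\bigcup_{x\in X}G(x)$.

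For the first inclusion, fix $Y\subseteq X$ and put $B=G(Y)\cup A_0$. Because the clone of $\A_1$ consists of the operations $f(x_1,\ldots,x_k)=g(x_j)$ and $f(x_1,\ldots,x_k)=a$ with $g\in G$ and $a\in A_0$, to show $B$ is closed it suffices to verify that $g(B)\subseteq B$ for all $g\in G$ and that $A_0\subseteq B$. The second is immediate, and for the first I would use $g(G(Y))=G(Y)$ (as $G$ is closed under composition) together with $g(A_0)\subseteq A_0$ from condition (ii). Hence $B$ is a subalgebra.

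For the converse, let $B$ be a subalgebra. The key preliminary step is to observe that $A_0\subseteq B$: if $A_0\neq\emptyset$ then $B$, being nonempty, is closed under the unary constant operations $f(x)=a$ with $a\in A_0$, which forces $A_0\subseteq B$ (the case $A_0=\emptyset$ being vacuous, with $\emptyset$ recovered by $Y=\emptyset$). Since $B$ is closed under every $g\in G$ and $G$ is a group, $B$ is $G$-invariant, and so is $B\setminus A_0=B\cap(A\setminus A_0)$. A $G$-invariant subset of $A\setminus A_0$ is a union of orbits, so letting $Y=X\cap B$ and using that for $x\in X$ one has $x\in B$ if and only if $G(x)\subseteq B$, I obtain $B\setminus A_0=G(Y)$. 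As $G(Y)\subseteq A\setminus A_0$ is disjoint from $A_0$, this yields $B=(B\setminus A_0)\cup A_0=G(Y)\cup A_0$.

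I expect the only delicate point to be the treatment of the constant operations and the empty set: the inclusion $A_0\subseteq B$ genuinely requires $B\neq\emptyset$ so that the unary constants can be evaluated, and one must note that when $A_0\neq\emptyset$ the claimed list is understood to range over the nonempty subalgebras (with $A_0$ itself obtained from $Y=\emptyset$). Everything else is routine orbit bookkeeping, the essential input being that condition (ii) of Definition~\ref{dgia} makes $A\setminus A_0$ a disjoint union of $G$-orbits indexed by the cross-section $X$.
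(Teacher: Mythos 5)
Your proof is correct and follows essentially the same route as the paper's: both directions are handled by taking $Y=B\cap X$, using the constant operations to get $A_0\subseteq B$, and using $G$-invariance together with the cross-section property of $X$ to identify $B\setminus A_0$ with $G(Y)$ (the paper does this as an element-chase $a=g(x)$, $x=g^{-1}(a)\in B\cap X$, which is your orbit argument in pointwise form). Your remark on the empty set just makes explicit the standard convention, implicit in the paper via its reference to Gr\"atzer, that subalgebras are nonempty.
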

\begin{proof}
It is clear that $G(Y)\cup A_0$ is a subalgebra of $\A_1$ for every $Y\subseteq X$.
Let $B$ be a subalgebra of $\A_1$, and let $Y=B\cap X$. We claim that $B=G(Y)\cup A_0$.
First, $G(Y)\cup A_0\subseteq B$ since $Y\subseteq B$ and $A_0$ is a subset of any subalgebra of $\A_1$.
Let $a\in B$. If $a\in A_0$, then $a\in G(Y)\cup A_0$. Suppose $a\in A\setminus A_0$.
Then $a$ must be in some $G$-transitive component of $A\setminus A_0$, and so $a=g(x)$ for some $x\in X$.
Thus $x=g^{-1}(a)\in B\cap X=Y$, and so $a\in G(Y)$. Hence $B\subseteq G(Y)\cup A_0$.
\end{proof}

\begin{theorem}\label{tgia}
Let $\gia=(A,A_0,G)$ be a group action independence algebra of finite dimension~$n$, and let $E$ be a largest subsemilattice
of $\End(\gia)$. Then $|E|=2^{n-1}$ if $A_0=\emptyset$, and $|E|=2^n$ if $A_0\ne\emptyset$.
\end{theorem}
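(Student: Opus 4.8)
The plan is to characterize the idempotent endomorphisms of $\gia$ via their images and fixed sets, using the equivalent algebra $\A_1$ and the subalgebra description from Lemma~\ref{lsua}, and then count the maximal families of pairwise commuting idempotents. Since $\A$ and $\A_1$ are equivalent, by Lemma~\ref{lequ} we have $\End(\gia)=\End(\A_1)$, so throughout I would work with $\A_1$, whose operations are just the permutations $g\in G$ together with the constant maps onto points of $A_0$. An endomorphism $\al$ of $\A_1$ must fix $A_0$ pointwise (each $a\in A_0$ is a nullary-like constant operation) and must commute with the $G$-action in the sense that $\al(g(x))=g(\al(x))$ for all $g\in G$.

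First I would analyze where the basis elements $x\in X$ can go. For an idempotent $\vep$, the image $\ima(\vep)$ is a subalgebra, hence by Lemma~\ref{lsua} equals $G(Y)\cup A_0$ for some $Y\subseteq X$, and $\vep$ fixes its image pointwise. The compatibility $\vep(g(x))=g(\vep(x))$ means $\vep$ is determined by its values on $X$; moreover $\vep(x)$ must lie in $\ima(\vep)$, so each $x$ is sent either into $A_0$ or onto some $G$-translate $g(x')$ with $x'\in Y$. The key local fact I expect to prove is that, for an idempotent, each basis element $x$ is either fixed or collapsed into $A_0$: if $\vep(x)=g(x')$ with $x'\ne x$, idempotency forces $g(x')$ fixed and then $x,x'$ lie in the same component, contradicting that $X$ is a cross-section; the fixed-point condition~(i) in Definition~\ref{dgia} rules out nontrivial $g$ fixing $x$ outside $A_0$. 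So each idempotent $\vep$ corresponds to choosing a subset $S\subseteq X$ of fixed basis elements, with the remaining elements of $X$ sent to designated points of $A_0$.

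Next I would determine the commuting condition and count. Two such idempotents commute precisely when their prescriptions on $X$ are compatible; I expect that the value in $A_0$ assigned to a collapsed basis element can be pinned down (e.g. forced to a single fixed target) so that an idempotent is essentially encoded by the subset $S\subseteq X$ it fixes, making any two automatically commute and yielding a semilattice under composition. When $A_0\ne\emptyset$ there are $2^n$ choices of $S$ (each $x$ independently fixed or collapsed into $A_0$), giving $|E|=2^n$. When $A_0=\emptyset$ there is no room to collapse, so every basis element must be sent to a $G$-translate within the image; the constraint that at least the image be nonempty and that collapsing is unavailable reduces the count by one bit, and I would show the largest semilattice has $2^{n-1}$ elements, matching the pattern of whether the clone contains constants (constants exist exactly when $A_0\ne\emptyset$).

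**The hard part will be** the precise combinatorics of commuting in the $A_0=\emptyset$ case: without constants, idempotents must route basis elements into the image component-wise, and I must verify that a maximal pairwise-commuting family has exactly $2^{n-1}$ members rather than $2^n$. I anticipate showing that fixing the image $G(Y)$ forces a rigid action on the collapsed components that is incompatible with independently toggling the last coordinate, so one degree of freedom is lost; pinning down this rigidity (likely via the fixed-point hypothesis~(i) and the transitivity of $G$ on each component) is where the real work lies, and I would isolate it as the central lemma before assembling the final count.
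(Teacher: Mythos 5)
There is a genuine gap, and it sits exactly where you placed your ``key local fact.'' The claim that an idempotent $\vep$ must either fix a basis element $x$ or send it into $A_0$ is false, and your purported proof of it contains a non sequitur: idempotency does force $\vep(x)=g(x')$ to be a fixed point of $\vep$, but nothing then puts $x$ and $x'$ in the same $G$-transitive component --- $\vep$ simply maps the component of $x$ onto the component of $x'$, which is entirely compatible with $X$ being a cross-section. Concretely, with $A_0=\emptyset$, fix $x_0\in X$ and $Y\subseteq X$ with $x_0\in Y$, and define $\ey(g(x))=g(x)$ for $x\in Y$ and $\ey(g(x))=g(x_0)$ for $x\notin Y$; this is a well-defined idempotent endomorphism (it is precisely the paper's lower-bound construction) that sends each $x\notin Y$ to $x_0\ne x$ with $x_0\notin A_0$. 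Worse, if your local fact were true, then in the case $A_0=\emptyset$ every idempotent would fix every basis element, giving $E=\{\id_A\}$ and contradicting the very bound $|E|=2^{n-1}$ you are trying to prove (for $n\ge2$); your proposal implicitly concedes this when it later allows basis elements to be ``sent to a $G$-translate within the image,'' so the two halves of the plan are inconsistent with each other.

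Because the characterization of idempotents fails, neither upper bound is actually established: both are deferred to ``I expect''/``I would show,'' and the anticipated ``rigidity'' mechanism for losing one bit when $A_0=\emptyset$ is not the right one --- idempotents that route collapsed components differently can perfectly well commute (all the $\ey$ above, with common $x_0$, pairwise commute). The paper's mechanism is different and is worth internalizing: first, two \emph{commuting} idempotents with the same image are equal (each fixes its image pointwise, so $\vep_1\vep_2=\vep_2$ and $\vep_2\vep_1=\vep_1$), hence $|E|$ is bounded by the number of subalgebras, which by Lemma~\ref{lsua} immediately gives $|E|\le 2^n$ when $A_0\ne\emptyset$; second, when $A_0=\emptyset$, the product $\vep_1\cdots\vep_k$ of all elements of the finite semilattice $E$ has its image contained in every $\ima(\vep_i)$, so all the images $G(Y_i)$ contain a common minimal subalgebra $G(x_0)$, and the map $\vep_i\mapsto Y_i\setminus\{x_0\}$ into $\mathcal{P}(X\setminus\{x_0\})$ is injective, yielding $|E|\le 2^{n-1}$. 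The lost bit is thus a \emph{common component shared by all images} --- a global property of the commuting family --- not any local constraint on where a single idempotent may send a basis element. Your lower-bound constructions, by contrast, are essentially correct and match the paper's.
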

\begin{proof}
Recall that $n=|X|$, where $X$ is our fixed cross-section of the $G$-transitive components of $A\setminus A_0$.
Since $\End(\gia)=\End(\A_1)$ (see Lemma~\ref{lequ}),
we may assume that $E$ is a largest subsemilattice of $\A_1$. Suppose that $A_0=\emptyset$.
Then, by Lemma~\ref{lsua}, the subalgebras of $\A_1$ are the sets $G(Y)$, where $Y$ is a nonempty subset of $X$.
Since $X$ is finite, it follows that $\A_1$ has finitely many subalgebras.

We claim that $E$ is finite.
Let $\vep\in E$. Since $\vep$ is an endomorphism of $\A_1$, $\ima(\vep)$ is a subalgebra of $\A_1$.
We will now show that different elements of $E$ have different images.
Indeed, suppose that $\vep_1,\vep_2\in E$ with $\ima(\vep_1)=\ima(\vep_2)$.
Then for every $a\in\A_1$, $(\vep_1\vep_2)(a)=\vep_1(\vep_2(a))=\vep_2(a)$ since $\vep_2(a)\in\ima(\vep_1)$
and an idempotent fixes every element of its image. Thus $\vep_2=\vep_1\vep_2$. Similarly, $\vep_1=\vep_2\vep_1$,
and so $\vep_1=\vep_2$ since $\vep_1\vep_2=\vep_2\vep_1$.
It follows that the number of elements of $E$ cannot be greater than the number of subalgebras of $\A_1$.
Since the latter is finite, $E$ is also finite, say $E=\{\vep_1,\ldots,\vep_k\}$.

The minimal subalgebras of $\A_1$ are the subsets
$G(\{x\})$, where $x\in X$. Since the subalgebra $\ima(\vep_1\cdots\vep_k)$ is included in $\ima(\vep_i)$
for every $i\in\{1,\ldots,k\}$, there is some $x_0\in X$ such that
$G(x_0)\subseteq\ima(\vep_i)$ for every $i\in\{1,\ldots,k\}$. Suppose that $\vep_i,\vep_j\in E$
with $\ima(\vep_i)\setminus G(x_0)=\ima(\vep_j)\setminus G(x_0)$. Then $\ima(\vep_i)=\ima(\vep_j)$,
and so, by the previous paragraph, $\vep_i=\vep_j$.
Therefore,
the mapping $\vep_i\to Y_i\setminus\{x_0\}$ (where $\ima(\vep_i)=G(Y_i)$) from $E$ to $\mathcal{P}(X\setminus\{x_0\})$ is injective,
and so $|E|\leq2^{|X|-1}=2^{n-1}$.

Now, fix $x_0\in X$ and let $\Gamma=\{Y\subseteq X: x_0\in Y\}$.
Then, for each $Y\in\Gamma$, define $\ey:A\mapsto A$  as follows: $\ey(a)=a$ if $a=g(x)$ and $x\in Y$,
and $\ey(a)=g(x_0)$ if $a=g(x)$ and $x\notin Y$.
Suppose $a\in A$. Since $A_0=\emptyset$,  $a$ must lie in some $G$-transitive component of $A\setminus A_0$. Thus
$a=g(x)$ for some $g\in G$ and $x\in X$, and such an $x$ is unique since $X$ is a cross-section of
the $G$-transitive components of $A\setminus A_0$. Suppose $g(x)=h(x)$, where $h\in G$.
Then $(h^{-1}g)(x)=x$, and so $h^{-1}g=1$ since all fixed points of the non-identity elements of $G$ are in $A_0$.
Hence $g=h$, and so $g$ is unique too.
We have proved that $\ey$ is well defined. It is easy to check that
$\ey$ is an idempotent in $\End(\A_1)$. Let $L=\{\ey:Y\in \Gamma\}.$ Then $L$ is a subsemilattice of $\A_1$
since for all $Y,Z\in\Gamma$, $\ey\ez=\eyz$. Moreover, $|L|=|\Gamma|=2^{|X|-1}=2^{n-1}$.
Therefore, $|E|\geq|L|=2^{n-1}$, and so $|E|=2^{n-1}$.

Suppose that $A_0\ne\emptyset$.
By Lemma~\ref{lsua}, the subalgebras of $\A_1$ are the sets $G(Y)\cup A_0$, where $Y\subseteq X$.
As in the first part of the proof, the number of elements of $E$ cannot be greater than the number of subalgebras of $\A_1$,
hence $|E|\leq2^{|X|}=2^n$.
Now, fix $a_0\in A_0$ and,
for each $Y\subseteq X$, define $\ey:A\mapsto A$  as follows: $\ey(a)=a$ if $a=g(x)$ for some $g\in G$ and $x\in Y$,
$\ey(a)=a_0$ if $a=g(x)$ and for some $g\in G$ and $x\notin Y$, and $\ey(a)=a$ if $a\in A_0$.
As in the first part of the proof,
$\ey$ is a well-defined
idempotent in $\End(\A_1)$. Let $L=\{\ey:Y\subseteq X\}.$ Then $L$ is a subsemilattice of $\A_1$
since for all $Y,Z\subseteq X$, $\ey\ez=\eyz$. Moreover, $|L|=2^{|X|}=2^n$.
Therefore, $|E|\geq|L|=2^n$, and so $|L|=2^n$.
\end{proof}

\section{Linear independence algebras}
\label{slia}
In this section, $\lia=(A,A_0,K)$ will denote a finite-dimensional linear independence algebra
(see Definition~\ref{dlia}). If $S_0$ is a basis of the linear subspace
$A_0$ and $S$ is a basis of the linear space $A$ that is an extension of $S_0$, then $T=S\setminus S_0$
is a basis of the independence algebra $\lia$ \cite[page~236]{urbanik}, so $\lia$ has dimension $|T|$. In other
words, the dimension of $\lia$ is equal to the linear dimension of the quotient space $A/A_0$.
The monoid $\End(\lia)$ consists of all mappings from $A$ to $A$
that preserve every operation $f(x_1,\ldots,x_k)=\sum_{i=1}^k\lam_ix_i+a$,
where $k\geq1$, each $\lam_i\in K$, and $a\in A_0$. It easily follows that
$\End(\lia)$ consists of all linear transformations of $A$ that fix every element of $A_0$.
We will determine the size of a largest subsemilattice of $\End(\lia)$.

First, we need the following result from matrix algebra \cite[pages 51--53]{HJ85}.
\begin{lemma}\label{ldia}
Let $\mathcal{E}$ be a set of $m\times m$ pairwise commuting diagonalizable matrices over a division ring $K$.
Then there exists an invertible matrix $P$ such that $P^{-1}MP$ is diagonal for every $M\in\mathcal{E}$.
\end{lemma}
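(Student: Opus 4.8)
The plan is to argue by induction on $m$, adapting the classical simultaneous-diagonalization argument for commuting diagonalizable operators to a division ring. I view $K^m$ as a right $K$-vector space on which each matrix acts on the left, so that a diagonalization $P^{-1}MP=\mathrm{diag}(d_1,\ldots,d_m)$ is precisely a basis of right eigenvectors $p_1,\ldots,p_m$ (the columns of $P$) with $Mp_i=p_id_i$. The base case $m\le1$ is immediate, and if every matrix in $\mathcal{E}$ is already diagonal there is nothing to prove; otherwise, provided some $M\in\mathcal{E}$ has at least two conjugacy classes of eigenvalues, I will use it to break $K^m$ into invariant blocks on which the whole family acts, and recurse.

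The key mechanism is that commuting matrices respect a suitable block decomposition. Diagonalizing $M$ alone and then independently rescaling each eigenvector $p_i\mapsto p_ic_i^{-1}$ (which sends the eigenvalue $d_i$ to its conjugate $c_id_ic_i^{-1}$), I arrange that the distinct diagonal entries of $M$ are representatives $d_1,\ldots,d_r$ of pairwise distinct conjugacy classes, so that after conjugation $M=\mathrm{diag}(d_1I_{m_1},\ldots,d_rI_{m_r})$. For any $N\in\mathcal{E}$, writing $N=(N_{ij})$ in the corresponding block form, the identity $NM=MN$ forces each entry $x$ of the block $N_{ij}$ to satisfy $xd_j=d_ix$; when $i\ne j$ this is impossible for $x\ne0$ because $d_i$ and $d_j$ are not conjugate, so $N$ is block diagonal. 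As the restriction of a diagonalizable matrix to an invariant subspace is again diagonalizable, each of the $r$ block families is a commuting family of diagonalizable matrices of smaller size, and the induction hypothesis furnishes block diagonalizers whose direct sum (composed with the initial conjugation) diagonalizes all of $\mathcal{E}$.

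I expect the main obstacle to be exactly the point where the dimension fails to drop: over a field every non-scalar $M$ has at least two distinct eigenvalues, but over a division ring a matrix can be similar to a single scalar matrix $dI$ even when $d$ is not central, so the procedure above may leave $r=1$. The genuine difficulty is therefore the case in which, on some block, $M$ reduces to $dI$. When $d$ lies in the centre $Z(K)$ this matrix is truly scalar, commutes with everything, and may simply be discarded, continuing the induction with the remaining (still commuting and diagonalizable) members of $\mathcal{E}$. When $d$ is noncentral the crucial observation is that any $N\in\mathcal{E}$ commuting with $dI$ must have all of its entries in the centraliser $C_K(d)$, which is a division subring of $K$ in which $d$ is central; the whole block family thus lives over $C_K(d)$, where the obstruction has disappeared, and one recurses there. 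Making this descent precise --- together with a preliminary reduction to a finite subfamily, justified because the lattice of common invariant decompositions of $K^m$ has length at most $m$ --- is where the real work lies. The remaining verifications (the effect of rescaling on eigenvalues, stability of diagonalizability under restriction, and that the assembled map is a diagonalization) are routine and follow the field case treated in \cite{HJ85}.
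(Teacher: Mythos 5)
Your reduction in the case of several conjugacy classes of eigenvalues is correct (the rescaling $p_i\mapsto p_ic_i^{-1}$, the entrywise relation $d_ix=xd_j$ killing the off-diagonal blocks, and the recursion on block size all work over a division ring), but the step you yourself flag as ``where the real work lies'' --- the descent to $C_K(d)$ when some $M$ reduces to $dI$ with $d$ noncentral --- is a genuine gap, and in fact it cannot be filled, because the lemma as stated is \emph{false} over a general division ring. The difficulty is that after the descent your blocks $N\in M_{m'}(C_K(d))$ are only known to be diagonalizable over $K$, not over $C_K(d)$, whereas diagonalizability over $C_K(d)$ is \emph{forced}: if $P^{-1}(dI)P$ is diagonal, then each column of $P$ is an eigenvector of $dI$, and comparing entries shows each column is a left $C_K(d)$-vector scaled on the right, i.e.\ $P=P_C\Lambda$ with $P_C\in GL_{m'}(C_K(d))$ and $\Lambda$ diagonal; conjugation by $\Lambda$ preserves diagonality, so any simultaneous diagonalizer of the block family exhibits each $N$ as diagonalizable over $C_K(d)$. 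This fails concretely: take $K$ the rational quaternions, $M=\mathbf{i}I_2$ and $N=\begin{pmatrix}0&1\\-2&0\end{pmatrix}$. These commute ($N$ has rational entries), $M$ is diagonal, and $N$ is diagonalizable over $K$ since $(\mathbf{i}+\mathbf{j})^2=(\mathbf{i}-\mathbf{j})^2=-2$ gives the independent eigenvectors $(1,\mathbf{i}+\mathbf{j})^T$ and $(1,\mathbf{i}-\mathbf{j})^T$; but $C_K(\mathbf{i})=\mathbb{Q}(\mathbf{i})$ is a field in which $x^2+2$ has no root, so $N$ is not diagonalizable over $C_K(\mathbf{i})$, and by the forced-form argument above the commuting diagonalizable pair $\{M,N\}$ has no simultaneous diagonalizer. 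So no induction scheme of the kind you propose can succeed for arbitrary division rings.

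For comparison: the paper offers no proof at all --- it cites Horn and Johnson, which treats matrices over $\mathbb{C}$, so the paper itself never addresses the division-ring subtlety that defeats your descent. The statement the paper actually needs (in Lemma~\ref{lub}) concerns only commuting \emph{idempotents}, whose ``eigenvalues'' $0$ and $1$ are central; there the classical induction, and indeed your recursion, go through verbatim, since blocks of commuting idempotents are again idempotent and idempotents are diagonalizable over any division ring. Two smaller remarks: your claim that restriction of a diagonalizable matrix to an invariant subspace is ``routine and follows the field case'' is misleading --- the field proof realizes the eigenprojections as polynomials $\sum a_kM^k$, and for noncentral $a_k$ these do not preserve right subspaces (right subspaces are not closed under left scalar multiplication); the fact is nevertheless true, by observing that $V$ is a sum of $M$-invariant lines, hence semisimple as a module, so every invariant subspace is itself a sum of invariant lines, i.e.\ has an eigenbasis. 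Your finite-subfamily reduction is fine in spirit but moot given the counterexample.
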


\begin{lemma}\label{lub}
Suppose that $\lia$ has dimension $n$. Let $E$ be a set of pairwise commuting idempotents in $\End(\lia)$.
Then $|E|\leq2^n$.
\end{lemma}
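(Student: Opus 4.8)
The plan is to exploit the block structure that fixing $A_0$ forces on an endomorphism, thereby reducing everything to linear algebra on an $n$-dimensional complement of $A_0$. Recall (from the discussion opening this section) that $\End(\lia)$ consists of the linear transformations of $A$ that fix $A_0$ pointwise, so each $\vep\in E$ is a linear idempotent, i.e.\ a projection. First I would fix a linear complement $C$ of $A_0$ in $A$, so that $A=A_0\oplus C$ and $\dim_K C=\dim_K(A/A_0)=n$. Writing $\vep$ as a block map with respect to this decomposition, the requirement $\vep|_{A_0}=\id$ forces the $A_0$-block to be the identity and the lower-left block to be $0$, so $\vep$ is determined by a pair $(B_\vep,D_\vep)$ with $B_\vep\colon C\to A_0$ and $D_\vep\colon C\to C$.

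Next I would translate the two hypotheses into conditions on these blocks by computing $\vep^2$ and $\vep_1\vep_2$ in block form. Idempotency of $\vep$ becomes the pair of conditions that $D_\vep$ is an idempotent endomorphism of the $n$-dimensional space $C$ and that $B_\vep D_\vep=0$. Commutativity of $\vep_1,\vep_2$ becomes $D_{\vep_1}D_{\vep_2}=D_{\vep_2}D_{\vep_1}$ together with the relation $B_{\vep_2}(\id-D_{\vep_1})=B_{\vep_1}(\id-D_{\vep_2})$. In particular $\{D_\vep:\vep\in E\}$ is a family of pairwise commuting idempotents on $C$. Each such $D_\vep$ is diagonalizable, since $C=\ima(D_\vep)\oplus\Ker(D_\vep)$, so Lemma~\ref{ldia} yields a single basis of $C$ in which every $D_\vep$ is diagonal with entries in $\{0,1\}$; there are at most $2^n$ such diagonal matrices, whence $\{D_\vep:\vep\in E\}$ has at most $2^n$ members.

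The crux is then to show that the assignment $\vep\mapsto D_\vep$ is injective on $E$, which immediately gives $|E|\le 2^n$. Suppose $D_{\vep_1}=D_{\vep_2}=:D$. From the idempotency relation $B_\vep D=0$, both $B_{\vep_1}$ and $B_{\vep_2}$ vanish on $\ima(D)$. From the commuting relation, with $D_{\vep_1}=D_{\vep_2}=D$, we obtain $(B_{\vep_1}-B_{\vep_2})(\id-D)=0$, so $B_{\vep_1}-B_{\vep_2}$ vanishes on $\Ker(D)=\ima(\id-D)$. Since $C=\ima(D)\oplus\Ker(D)$, the maps $B_{\vep_1}$ and $B_{\vep_2}$ agree on all of $C$, and therefore $\vep_1=\vep_2$.

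The step I expect to be the main obstacle is precisely this injectivity. It is what forbids two distinct projections with the same action modulo $A_0$ from coexisting in a commuting family, and it makes clear that the commuting hypothesis, not merely idempotency, is essential (two such projections do exist, but they fail to commute). It is also the feature that lets the argument work even when $A_0$, and hence $A$, is infinite-dimensional: we never diagonalize on all of $A$, only on the finite-dimensional quotient data $D_\vep$, while the potentially infinite part $B_\vep$ is pinned down by the block relations. The purely routine component is the block computation of $\vep^2$ and $\vep_1\vep_2$, which I would carry out explicitly but which presents no difficulty.
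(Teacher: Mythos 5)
Your proof is correct, and at its core it travels the same road as the paper: factor out $A_0$ to get a family of commuting idempotents on an $n$-dimensional space, then invoke Lemma~\ref{ldia} to diagonalize simultaneously and count the at most $2^n$ diagonal matrices with entries in $\{0,1\}$. The difference lies in where the rigor is placed. The paper dispatches the reduction in one sentence, asserting that the elements of $E$ are in a ``natural 1-1 correspondence'' with projections of the quotient $A/A_0$; but for arbitrary idempotents fixing $A_0$ pointwise that correspondence is \emph{not} injective --- with $A=K^2$ and $A_0=\spa\{e_1\}$, the idempotents $\vep(x,y)=(x+y,0)$ and $\vep'(x,y)=(x,0)$ both fix $A_0$ and induce the same (zero) projection of $A/A_0$ --- so injectivity genuinely depends on the commuting hypothesis (indeed $\vep\vep'\ne\vep'\vep$ here). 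Your block decomposition $A=A_0\oplus C$ makes this precise: idempotency yields $B_\vep D_\vep=0$, so $B_\vep$ vanishes on $\ima(D_\vep)$, and commutativity yields $B_{\vep_2}(\id-D_{\vep_1})=B_{\vep_1}(\id-D_{\vep_2})$, so when $D_{\vep_1}=D_{\vep_2}=D$ the difference $B_{\vep_1}-B_{\vep_2}$ vanishes on $\Ker(D)=\ima(\id-D)$; together with $C=\ima(D)\oplus\Ker(D)$ this gives exactly the injectivity of $\vep\mapsto D_\vep$ on $E$ that the paper's ``natural'' correspondence tacitly assumes. Your block computations of $\vep^2$ and $\vep_1\vep_2$ are correct, as is the observation that only the finite-dimensional data $D_\vep$ is ever diagonalized, so the argument is indifferent to the linear dimension of $A_0$. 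In short: same strategy as the paper, but your version proves the one step the paper waves through, and you correctly identify it as the crux.
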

\begin{proof}
As idempotent endomorphisms, the elements of $E$ are pairwise commuting projections that contain $A_0$ in
their image. These elements are in a natural 1-1 correspondence with the projections of the linear quotient space $A/A_0$, and
moreover, the correspondence clearly preserve the property of being pairwise commutative. Hence we may assume that
$A_0=\{0\}$ and that $n$ is also the linear dimension of $\lia$.

With respect to a linear basis for $\lia$, projections are represented by diagonalizable matrices. By Lemma \ref{ldia} we can find a basis $B$ that
diagonalizes all matrices representing the elements of $E$ simultaneously.  With respect to $B$, the elements of $E$ are represented by diagonal
$n \times n$-matrices with values in $\{0,1\}$ (as the elements of $E$ are projections). The result follows as there are at most $2^n$ such matrices.
\end{proof}

\begin{theorem}
\label{tlia}
Let $\lia=(A,A_0,K)$ be a linear independence algebra of finite dimension $n$, and let $E$ be a largest subsemilattice
of $\End(\lia)$. Then $|E|=2^n$.
\end{theorem}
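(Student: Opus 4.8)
The plan is to combine the upper bound already furnished by Lemma~\ref{lub} with an explicit construction realizing it. A subsemilattice of $\End(\lia)$ is in particular a set of pairwise commuting idempotents, so Lemma~\ref{lub} immediately gives $|E|\le 2^n$. It therefore remains to exhibit a subsemilattice of size $2^n$, which forces $|E|\ge 2^n$ and hence equality.

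To build such a family, I would first fix a basis adapted to the subspace $A_0$: choose a linear basis $S_0$ of $A_0$, extend it to a linear basis $S$ of $A$, and set $T=S\setminus S_0$. As recalled at the start of this section, $T$ is a basis of the independence algebra $\lia$, so $|T|=n$. For each subset $Y\subseteq T$, define $\vep_Y$ to be the unique linear transformation of $A$ satisfying $\vep_Y(s)=s$ for all $s\in S_0$, $\vep_Y(t)=t$ for all $t\in Y$, and $\vep_Y(t)=0$ for all $t\in T\setminus Y$. Because $\vep_Y$ fixes a basis of $A_0$ and is linear, it fixes every element of $A_0$, and hence $\vep_Y\in\End(\lia)$ by the description of $\End(\lia)$ recalled above.

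I would then verify directly on basis vectors that $\vep_Y\vep_Z=\vep_{Y\cap Z}$ for all $Y,Z\subseteq T$: both sides fix each $s\in S_0$, fix each $t\in Y\cap Z$, and annihilate every other element of $T$. In particular each $\vep_Y$ is idempotent (taking $Z=Y$) and any two of them commute, so $L=\{\vep_Y:Y\subseteq T\}$ is a subsemilattice of $\End(\lia)$. Since distinct subsets $Y$ yield distinct maps—they differ on any basis element of $T$ lying in their symmetric difference—we have $|L|=2^{|T|}=2^n$, whence $|E|\ge 2^n$ and the theorem follows.

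There is no serious obstacle here: the real content of the upper bound has already been isolated in Lemma~\ref{lub}, whose one delicate point—simultaneous diagonalizability of commuting idempotent matrices over a division ring—was dispatched by Lemma~\ref{ldia}. For the lower bound the only things to check are that the $\vep_Y$ are genuine endomorphisms (they fix $A_0$ pointwise) and that composition realizes intersection of index sets, both of which are routine computations on the chosen basis.
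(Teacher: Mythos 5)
Your proposal is correct and follows essentially the same route as the paper's proof: the upper bound $|E|\le 2^n$ is cited from Lemma~\ref{lub}, and the lower bound comes from the same family of idempotents indexed by subsets $Y\subseteq T$ (fixing $Y$, annihilating $T\setminus Y$, and fixing $A_0$ pointwise) satisfying $\vep_Y\vep_Z=\vep_{Y\cap Z}$. The only difference is that you spell out the verification on an adapted linear basis $S_0\cup T$, which the paper leaves as ``straightforward to check.''
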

\begin{proof}
By Lemma~\ref{lub}, it suffices to construct a semilattice of endomorphisms of $\lia$ of size $2^n$.
Let $T$ be a basis for $\lia$ (as an independence algebra). For each subset $Y\subseteq T$,
define $\ey:A\to A$ by: $\ey(x)=x$ if $x\in Y$, and $\ey=0$ if $x\in T\setminus Y$. It is straightforward
to check that $L=\{\ey:Y\subseteq T\}$ is a subsemilattice of $\End(\lia)$ with $|L|=2^{|T|}=2^n$.
\end{proof}

\begin{cor}
Let $V$ be a vector space of dimension $n$. Then the largest semilattices of linear transformations of $V$ have size $2^n$.
\end{cor}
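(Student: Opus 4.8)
The plan is to realize the vector space $V$ as a special instance of a linear independence algebra and then invoke Theorem~\ref{tlia}; essentially all of the mathematical content lives in that theorem, so the task here is one of identification and dimension bookkeeping rather than fresh argument.

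First I would take $A_0=\{0\}$, the trivial subspace, and regard $V$ as the linear independence algebra $\lia=(V,\{0\},K)$ of Definition~\ref{dlia}. I would then check that its clone consists of exactly the linear operations: since every admissible constant term $a$ must lie in $A_0=\{0\}$, each clone operation $f(x_1,\dots,x_k)=\sum_{i=1}^k\lam_ix_i+a$ collapses to $\sum_{i=1}^k\lam_ix_i$. Consequently, by the description of $\End(\lia)$ recorded at the start of Section~\ref{slia}, the endomorphism monoid $\End(\lia)$ is precisely the monoid of all linear transformations of $V$ that fix every element of $A_0=\{0\}$, that is, all linear transformations of $V$, under composition.

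Next I would reconcile the two notions of dimension. The dimension of the independence algebra $\lia$ equals the linear dimension of the quotient space $V/A_0=V/\{0\}$, which is just the linear dimension $n$ of $V$. Thus the hypotheses of Theorem~\ref{tlia} are satisfied with independence-algebra dimension equal to $n$, and the theorem yields directly that a largest subsemilattice $E$ of $\End(\lia)$, which is a largest semilattice of linear transformations of $V$ under composition, satisfies $|E|=2^n$.

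There is no genuine obstacle in this corollary. The only two points that require any care are confirming that the constant term is forced to vanish once $A_0=\{0\}$ (so that the clone is exactly the linear clone and the endomorphisms are exactly the linear maps), and matching the independence-algebra dimension with the linear dimension of $V$. Both are immediate from the definitions, and the count $2^n$ is then inherited verbatim from Theorem~\ref{tlia}.
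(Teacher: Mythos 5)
Your proposal is correct and matches the paper's intended derivation exactly: the paper states this corollary without separate proof as the immediate specialization of Theorem~\ref{tlia} to $A_0=\{0\}$, where $\End(\lia)$ becomes the monoid of all linear transformations of $V$ (fixing $0$ being vacuous) and the independence-algebra dimension coincides with the linear dimension of $V/\{0\}=V$. Your two bookkeeping checks --- the vanishing constant term and the dimension match --- are precisely the points being tacitly invoked.
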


\section{Affine independence algebras}
\label{saia}
In this section, $\aia=(A,A_0,K)$ will denote a finite-dimensional affine independence algebra
(see Definition~\ref{daia}). The dimension of $\aia$ is one more than the linear dimension of the quotient space
$A/A_0$ \cite[page~236]{urbanik}. Hence the dimension of $\aia$ is one more than the dimension of the corresponding
linear independence algebra $\lia=(A,A_0,K)$.
The monoid $\End(\aia)$ consists of all mappings from $A$ to $A$
that preserve every operation $f(x_1,\ldots,x_k)=\sum_{i=1}^k\lam_ix_i+a$,
where $k\geq1$, each $\lam_i\in K$, $\sum_{i=1}^k\lam_i=1$, and $a\in A_0$.

\begin{theorem}\label{taia}
Let $\aia=(A,A_0,K)$ be an affine independence algebra of finite dimension $n$, and let $E$ be a largest subsemilattice
of $\End(\aia)$. Then $|E|=2^{n-1}$.
\end{theorem}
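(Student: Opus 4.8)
The plan is to sandwich $|E|$ between $2^{n-1}$ and $2^{n-1}$, with the lower bound essentially free and the upper bound carrying all the work. First I would describe $\End(\aia)$ explicitly. Preserving the unary operations $f(x)=x+a$ with $a\in A_0$ together with the binary affine combinations $\lam x+(1-\lam)y$ forces each $\vep\in\End(\aia)$ to have the form $\vep(x)=L(x)+b$, where $L$ is a linear endomorphism of $A$ with $L|_{A_0}=\id$ and $b\in A$ is arbitrary; conversely every such map preserves all operations $\sum_i\lam_ix_i+a$ with $\sum_i\lam_i=1$ and $a\in A_0$, hence lies in $\End(\aia)$. Because these operations form a subset of the operations of the linear independence algebra $\lia=(A,A_0,K)$ on the same data, we get $\End(\lia)\subseteq\End(\aia)$. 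Since $\lia$ has dimension $n-1$, Theorem~\ref{tlia} provides a subsemilattice of $\End(\lia)$ of size $2^{n-1}$, and it sits inside $\End(\aia)$; thus $|E|\ge2^{n-1}$.

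For the upper bound I would first pass to the quotient $\bar A=A/A_0$. An idempotent $\vep(x)=L(x)+b$ has $L$ a projection and $b\in\Ker L$, and it induces the affine idempotent $\bar\vep(\bar x)=\bar L(\bar x)+\bar b$ on $\bar A$, where $\dim_K\bar A=n-1$. The crucial observation is that $\vep\mapsto\bar\vep$ is injective on the semilattice $E$: since $\ima L\supseteq A_0$, the set $\fix(\vep)=\ima(\vep)$ is a union of $A_0$-cosets and therefore equals the full preimage of $\fix(\bar\vep)$ under $A\to\bar A$; hence $\bar\vep=\bar\delta$ forces $\ima(\vep)=\ima(\delta)$, and two commuting idempotents with the same image are equal (the argument already used in the proof of Theorem~\ref{tgia}). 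It therefore suffices to bound a commuting family of affine idempotents on $\bar A$, so after renaming I may assume $A_0=\{0\}$ and $\dim_K A=n-1$.

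The hard part is the count, and in particular obtaining the sharp exponent $n-1$ rather than $n$. Here I would homogenize: the assignment $\vep(x)=L(x)+b\mapsto\hat\vep(x,t)=(L(x)+tb,\,t)$ is an injective monoid homomorphism from the affine maps of the $(n-1)$-dimensional space $A$ into the linear maps of $\hat A=A\oplus Ke$, carrying idempotents to projections and preserving commutativity. Fixing a basis of $\hat A$, the elements of $E$ become pairwise commuting idempotent $n\times n$ matrices $M_\vep$ over $K$, and because each $\hat\vep$ preserves the last coordinate we have $\phi M_\vep=\phi$ for the row vector $\phi=(0,\dots,0,1)$. By Lemma~\ref{ldia} there is an invertible $P$ with $P^{-1}M_\vep P=D_\vep$ simultaneously diagonal, with entries in $\{0,1\}$. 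Writing $\psi=\phi P\ne0$, the relation $\phi M_\vep=\phi$ becomes $\psi D_\vep=\psi$; comparing coordinates and cancelling in the division ring, every index $i$ with $\psi_i\ne0$ carries diagonal entry $1$ in every $D_\vep$. Fixing one such index $i_0$, each $D_\vep$ then corresponds to a subset of $\{1,\dots,n\}$ containing $i_0$, and there are exactly $2^{n-1}$ such subsets. As $\vep\mapsto M_\vep\mapsto D_\vep$ is injective, $|E|\le2^{n-1}$, completing the proof.

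The two places I expect trouble are exactly these last two. The factor-of-two gain is entirely due to the common fixed functional $\phi$ produced by homogenization, which pins one diagonal coordinate to $1$; routing this through the simultaneous diagonalization of Lemma~\ref{ldia} and a cancellation in $K$ (rather than through eigenvalues or a dual space) is what keeps the argument valid over a possibly noncommutative division ring. The injectivity of the reduction to $A/A_0$ is the other subtle point, and it rests precisely on the $A_0$-saturation of $\fix(\vep)$ together with the equal-image-implies-equal property of commuting idempotents.
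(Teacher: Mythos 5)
Your proof is correct, but its upper bound travels a genuinely different route from the paper's. The lower bound is identical in both: $\End(\lia)\subseteq\End(\aia)$ together with Theorem~\ref{tlia}. For the upper bound, the paper also first reduces modulo $A_0$ (your version of this step is in fact more careful: the paper compresses it into ``as in the linear case'', while you spell out the $A_0$-saturation of $\fix(\vep)=\ima(\vep)$ and invoke the equal-image-implies-equal property of commuting idempotents -- which is exactly what is needed, since the correspondence $\vep\mapsto\bar\vep$ is \emph{not} injective on single idempotents, only on commuting families). After the reduction, however, the paper stays in dimension $n-1$: it writes each $\vep\in E$ uniquely as $\vep(x)=\al_\vep(x)+c_\vep$ with $\al_\vep$ a linear projection, shows the linear parts commute by expanding $\vep_1\vep_2=\vep_2\vep_1$ and setting $x=0$, and shows $\vep\mapsto\al_\vep$ is injective on $E$ because two commuting idempotents with the same linear part but $c_1\ne c_2$ would have disjoint images (two distinct translates of $\ima(\al)$ by vectors in $\Ker(\al)$) and hence could not commute; the bound $|E|\le 2^{n-1}$ then comes from Theorem~\ref{tlia} applied in dimension $n-1$. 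You instead homogenize into linear maps of an $n$-dimensional space, apply Lemma~\ref{ldia} directly, and recover the lost factor of two from the preserved functional $\phi$, which pins one diagonal entry of every $D_\vep$ to $1$. Both routes ultimately rest on Lemma~\ref{ldia}; the paper's is shorter and absorbs the factor-of-two gain into the injectivity of the linear-part map, while yours makes the mechanism behind the exponent $n-1$ completely explicit and is visibly robust over a noncommutative division ring (the cancellation $\psi_{i}d_{i}=\psi_{i}\Rightarrow d_{i}=1$ avoids any eigenvalue talk). One small repair: the $A_0$-saturation of $\fix(\vep)$ follows from $L|_{A_0}=\id$ (so that $\vep(x+a)=\vep(x)+a$ for $a\in A_0$), not merely from $A_0\subseteq\ima L$ as you write; since $L|_{A_0}=\id$ is part of your own description of $\End(\aia)$, this is a one-word fix.
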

\begin{proof}
The monoid
$\End(\aia)$ consists of all affine transformations of $A$ whose restrictions to $A_0$ are translations.
Hence, every idempotent in $\End(\aia)$ is a projection onto an affine subspace that contains a translate of $A_0$. As in the linear case,
we may assume that $A_0 =\{0\}$, that the idempotent elements of $\End(\aia)$ are the projections onto affine subspaces, and
that $n$ is one more than the linear dimension of $\lia$.

Every affine projection $\vep$ can be written uniquely in the form $\vep(x)=\al_\vep(x)+c_\vep$, where $\al_\vep$ is a linear projection
and $c_\vep\in A$.
Let $\vep_1,\vep_2 \in E$ with $\vep_1(x)=\al_1(x)+c_1$ and $\vep_2(x)=\al_2(x)+c_2$.

We claim that $\al_1$ and $\al_2$ commute.
Since $\vep_1$ and $\vep_2$ commute, we obtain, for every $x\in A$,
\begin{align}
\al_1(\al_2(x))+\al_1(c_2)+c_1&=\al_1(\al_2(x)+c_2)+c_1=\vep_1(\al_2(x)+c_2)=\vep_1(\vep_2(x))=\vep_2(\vep_1(x))\notag\\
&=\vep_2(\al_1(x)+c_1)=\al_2(\al_1(x)+c_1)+c_2= \al_2(\al_1(x))+\al_2(c_1)+c_2.\notag
\end{align}
Setting $x=0$, we obtain $\al_1(c_2)+c_1=\al_2(c_1)+ c_2$, which implies that
$\al_1(\al_2(x))=\al_2(\al_1(x))$ for every $x\in A$. This proves the claim.

Let $L=\{\al_\vep:\vep\in E\}$. By the claim, $L$ is a subsemilattice of $\End(\lia)$.
Define $\varphi:E\to L$ by $\varphi(\vep)=\al_\vep$.
Let $\vep_1,\vep_2 \in E$ such that $\vep_1(x)=\al(x)+c_1$ and $\vep_2(x)=\al(x)+c_2$ (that is, $\varphi(\vep_1)=\varphi(\vep_2)$).
If $c_1 \ne c_2$, then $\vep_1(x)=\al(x)+c_1$ and $\vep_2(x)=\al(x)+c_2$ have disjoint images, and hence they do not commute.
It follows that $\varphi$ is injective. Therefore,
$|E|\leq|L|$. By Theorem~\ref{tlia}, $|L|\leq 2^m$, where $m$ is the dimension of the linear independence
algebra $\lia$. Since $n=m+1$, it follows that $|E|\leq2^{n-1}$.

Conversely, by Theorem~\ref{tlia}, there is a subsemilattice $L_1$ of $\End(\lia)$ with $|L_1|=2^m=2^{n-1}$.
It is clear by the definition of $\lia$ and $\aia$ that $\End(\lia)\subseteq\End(\aia)$.
Hence $L_1$ is also a subsemilattice of $\End(\aia)$, and so
$|E|\geq|L_1|=2^{n-1}$. The result follows.
\end{proof}

\section{The main theorem}
\label{smth}
Our main result will follow from Theorem~\ref{tcla} and the following proposition.

\begin{prop}
\label{plink}
Let $\A=\lan A;F\ran$ be an independence algebra with $\dim(\A)\geq1$. Let $\A_1=\lan A;F_1\ran$, where
$F_1=\clf\setminus\con$. Then $\A_1$ is an independence
algebra with $\dim(\A_1)=\dim(\A)$ and $\End(\A_1)=\End(\A)$.
\end{prop}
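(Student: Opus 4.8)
The plan is to show that passing from $\A$ to $\A_1$ (where we simply delete the nullary operations from the clone) changes neither the lattice of subalgebras in any essential way, nor the collection of endomorphisms, and therefore preserves the dimension. The guiding intuition is that $\clf$ and $F_1 = \clf\setminus\con$ generate the same non-nullary operations, so the only possible discrepancy lies in how the empty set and the smallest subalgebra behave. First I would unwind the definition of $\con$ and $\langle\emptyset\rangle$: since $\A$ has $\dim(\A)\geq1$, the algebra has a nonempty basis, and the key point is that removing the constants $\con$ only affects $\langle\emptyset\rangle$, replacing $\langle\con\rangle$ by $\emptyset$. For any \emph{nonempty} $X\subseteq A$, the subalgebra $\langle X\rangle$ computed in $\A_1$ contains $\langle\con\rangle$ anyway (the nullary values are recovered as $0$-ary terms applied within any nonempty generating set via the non-nullary clone operations, since every element of $\langle\con\rangle$ lies in $\langle X\rangle_{\A}=\langle X\rangle_{\A_1}$), so $\langle X\rangle$ agrees in $\A$ and $\A_1$ for all nonempty $X$.

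The main structural work is to verify the two defining properties of an independence algebra for $\A_1$. For the exchange property, I would argue that since the closure operator $\langle\cdot\rangle$ agrees on all nonempty sets and the exchange property (\ref{eq1}) only ever involves sets of the form $X\cup\{y\}$ and $X\cup\{x\}$ which are nonempty, the condition transfers verbatim from $\A$ to $\A_1$. The one case needing care is when $X=\emptyset$, where I would check directly that independence of singletons and the exchange condition still hold using that $\langle\emptyset\rangle_{\A_1}=\emptyset$. For the extension property, I would observe that a basis $X$ of $\A_1$ is the same as a basis of $\A$ (as bases are minimal nonempty generating sets and $\langle\cdot\rangle$ agrees on nonempty sets), and that any $\al:X\to A$ extends to an endomorphism of $\A$ by hypothesis; the remaining point is that such an extension automatically preserves the non-nullary operations of $F_1$, which it does since $F_1\subseteq\clf$ and endomorphisms preserve the whole clone.

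The equality $\End(\A_1)=\End(\A)$ is the cleanest piece and I would handle it via the clone. An endomorphism is exactly a map preserving every operation in the clone. Every map preserving all of $\clf$ certainly preserves the smaller set $F_1$, giving $\End(\A)\subseteq\End(\A_1)$. For the reverse inclusion, the essential observation is that a map $\al$ preserving all non-nullary clone operations must also preserve the nullary ones: if $c\in\con$ with value $a\in A$, then since $\dim(\A)\geq1$ there is a unary term built from $F_1$ witnessing $a$ as a value fixed appropriately, and more directly, the constant $a$ equals $t(b)$ for some $b$ and some operation $t\in F_1$, forcing $\al(a)=a$. This is essentially Lemma~\ref{luna} run in reverse, and it is where the hypothesis $\dim(\A)\geq1$ is genuinely used, since a dimension-zero algebra could have $A=\langle\con\rangle$ with no non-nullary way to pin down the constants.

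I expect the main obstacle to be the $X=\emptyset$ boundary case in establishing independence and the exchange property for $\A_1$, precisely because the definition of $\langle\emptyset\rangle$ is exactly what differs between $\A$ and $\A_1$. Everything reduces to showing that this difference is invisible to the relevant structural properties once $\dim(\A)\geq1$ guarantees a nonempty basis; the dimension equality $\dim(\A_1)=\dim(\A)$ then follows immediately since the two algebras share the same bases. I would organize the proof so that the lemma ``$\langle X\rangle$ agrees on nonempty $X$'' is proved once up front and then invoked repeatedly, keeping the verification of each defining property short.
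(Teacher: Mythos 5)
Your overall route is the same as the paper's: establish $\lan X\ran_{F_1}=\lan X\ran_{\clf}$ for all \emph{nonempty} $X$ by replacing each nullary operation with a unary constant operation lying in $F_1$ (the device of Lemma~\ref{luna}); transfer the exchange property, whose instances (\ref{eq1}) only involve closures of nonempty sets; and obtain $\End(\A_1)=\End(\A)$ because a map preserving the unary constant $g(x)=a$ must fix $a$. Those parts are correct (modulo a circular-sounding sentence in your closure-agreement paragraph, where the intended argument --- apply the unary constant to any element of $X$ --- is clear). But note that the endomorphism equality needs no dimension hypothesis whatsoever: Lemma~\ref{luna} produces the unary constant in an arbitrary algebra. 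Your claim that $\dim(\A)\geq1$ is ``genuinely used'' there is a misdiagnosis, and it signals that you have not located the actual delicate point.

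That point is the singleton-basis case, and your parenthetical justification ``bases are minimal nonempty generating sets and $\lan\cdot\ran$ agrees on nonempty sets'' does not cover it. Minimality of a generating set is tested against \emph{all} proper subsets, including $\emptyset$, and $\emptyset$ is precisely where the two closure operators disagree: $\lan\emptyset\ran_{F_1}=\emptyset$ while $\lan\emptyset\ran_{\clf}=\lan\con\ran$. For a basis $X$ of $\A_1$ with $|X|\geq2$ this is harmless, since independence of $X$ in $\A$ only requires closures of the nonempty sets $X\setminus\{x\}$; but if $X=\{x\}$, then $\{x\}$ is automatically independent in $\A_1$, yet could a priori fail to be independent in $\A$, namely when $x\in\lan\con\ran$. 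In that situation $x$ is the image of a unary constant operation in $F_1$, so every endomorphism of $\A_1$ fixes $x$, the extension property for $\A_1$ fails (for $|A|\geq2$), and $\dim(\A_1)\neq\dim(\A)$ --- so the case cannot be waved away; your remark that the $X=\emptyset$ boundary case concerns ``independence of singletons in $\A_1$'' addresses only the trivial direction. The paper rules the bad case out with a dedicated argument, which is the \emph{only} place $\dim(\A)\geq1$ is genuinely used: if $x\in\lan\emptyset\ran_{\clf}$ and $\lan\{x\}\ran=A$, then every element of $A$ has the form $t(x,\ldots,x)$ with $x$ itself the value of a constant unary operation in $F_1$, so every element of $A$ is an algebraic constant; hence $\lan\emptyset\ran_{\clf}=A$ and $\dim(\A)=0$, a contradiction. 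Without this step your proof has a genuine gap, and the statement it glosses over is exactly where the proposition fails when $\dim(\A)=0$ and $|A|\geq2$ (there $\dim(\A_1)=1\neq 0$ and $\End(\A_1)$ is trivial) --- this is the same phenomenon as the $v^*$-algebras that are not independence algebras mentioned in the introduction.
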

\begin{proof}
We first prove that $\End(\A_1)=\End(\A)$.
Since $F_1\subseteq\clf$, we have $\End(\A)\subseteq\End(\A_1)$. Conversely, let $\al\in\End(\A_1)$ and let $f\in\clf$.
If $f$ is not a nullary operation, then $f\in F_1$, and so $\al$ preserves $f$.
Suppose $f=a\in A$ is a nullary operation.
By the definition of $\clf$, there exists a unary constant operation $g\in\clf$ with image $\{a\}$.
Then $g\in F_1$, and so $\al$ preserves $g$. Hence $\al(a)=\al(g(a))=g(\al(a))=a$, and so $\al$ preserves $f$.
It follows that $\End(\A_1)=\End(\A)$.

For $X\subseteq A$, we denote by $\lan X\ran_{\clf}$ and $\lan X\ran_{F_1}$ the closure of $X$ in $\A$
and $\A_1$, respectively. Let $\emptyset\ne X\subseteq A$. We claim that $\lan X\ran_{F_1}=\lan X\ran_{\clf}$.
Since $F_1\subseteq\clf$, we have $\lan X\ran_{F_1}\subseteq\lan X\ran_{\clf}$. Let $a\in\lan X\ran_{\clf}$.
Then $a=f(c_1,\ldots,c_k)$ ($k\geq0$) for some $f\in\clf$ and $c_1,\ldots,c_k\in X$. If $k\geq1$, then
$f\in F_1$, and so $a\in\lan X\ran_{F_1}$.
Suppose $k=0$. Then $a$ is a nullary operation in $\clf$. As in the previous paragraph, we then have a unary
operation $g\in F_1$ such that $g(x)=a$ for every $x\in A$. Since $X\ne\emptyset$, there is some $c\in X$.
Thus $a=g(c)\in\lan X\ran_{F_1}$. It follows that $\lan X\ran_{F_1}=\lan X\ran_{\clf}$.

By the claim and the fact that $\A$ satisfies the exchange property (\ref{eq1}), $\A_1$ also satisfies
the exchange property.
Let $X$ be a basis for $\A_1$. Since $\A_1$ does not have any nullary operations,
we must have $X\ne\emptyset$. We claim that $X$ is an independent set for $\A$ if $|X| \ge 2$.  For any
$x \in X$, $|X \setminus \{x\}| \ge 1$
and  the claim follows from $\lan X \setminus \{x\} \ran_{F_1}=\lan X\setminus \{x\} \ran_{\clf}$, and the fact that
$X$ is an independent set for $\A_1$.
As $\lan X\ran_{F_1}=A=\lan X\ran_{\clf}$, it is also a basis for
 for $\A$.
This implies $\dim(\A_1)=\dim(\A)$. Moreover, since $\End(\A_1)=\End(\A)$, every $\al:X\to A$ can be extended
to an endomorphism of $\End(\A_1)$. Hence $\A_1$ is an independence algebra.

Now let $|X|=1$. Then $A=\lan X \ran_{F_1} \subseteq \lan X \ran_{\clf}$, and as we assumed that
$\dim(\A) \ge 1$, we have that $\dim(\A)=1 = \dim(\A_1)$. If  every basis $X$ is also in independent set of $\A$, then  as above
we can conclude that $\A_1$ is an independence algebra. So suppose this is not the case. Then for some $X=\{x\}$,
$x \in \lan \emptyset \ran_{\clf}$,
which means that $x$ is the image of a nullary operation from $\clf$. Then $x$ would be the image of a constant unary
operation $u_x$ from $F_1$. As $\lan X \ran=A$ every element of $A$ would also be the image of a constant unary function
$t(u_x(x), \dots, u_x(x))$ from $F_1$, for some $t$.
But then every element of $A$ would also have the form $t(x,\dots,x)$, and hence $\lan \emptyset \ran_{\clf}=A$, contradicting
that $\dim(\A) \ge 1$.

\end{proof}

\begin{lemma}
\label{lea}
Let $\eia=(A,i,q)$ be the exceptional algebra. Then $\{i,q\}_{\mathrm{cl}}$ does not contain any constant operations.
\end{lemma}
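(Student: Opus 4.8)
The plan is to show directly that the clone $\{i,q\}_{\mathrm{cl}}$ contains no constant operations, using the explicit description of the exceptional algebra. Recall from Definition~\ref{deia} that $A=\{a,b,c,d\}$ has four elements, $i$ is a fixed-point-free involution, and $q$ is a symmetric ternary operation satisfying $q(x,y,i(x))=y$ and $q(x,y,x)=x$. The natural strategy is to exhibit an invariant that every clone operation respects but that no constant operation can respect.

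First I would observe that both fundamental operations $i$ and $q$ commute with $i$ itself, viewed as a permutation of $A$: clearly $i(i(x))=i(i(x))$, and one checks from the defining identities that $q(i(x_1),i(x_2),i(x_3))=i(q(x_1,x_2,x_3))$, i.e.\ $i$ is an automorphism of $\eia$. (This symmetry is exactly what one expects, since $i$ has no fixed points and the defining conditions on $q$ are phrased symmetrically with respect to the involution.) Since $i$ is an automorphism and the clone is generated by $i$ and $q$ together with projections under generalized composition, a routine induction on the construction of clone terms shows that \emph{every} operation $f\in\{i,q\}_{\mathrm{cl}}$ satisfies
\[
f(i(x_1),\ldots,i(x_k))=i(f(x_1,\ldots,x_k))
\]
for all $x_1,\ldots,x_k\in A$; that is, $f$ commutes with the automorphism $i$.

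The conclusion then follows immediately. Suppose for contradiction that $f\in\{i,q\}_{\mathrm{cl}}$ is a constant $k$-ary operation with constant value $e\in A$. Applying the commuting identity at any tuple gives $i(e)=i(f(x_1,\ldots,x_k))=f(i(x_1),\ldots,i(x_k))=e$, so $e$ is a fixed point of $i$. But $i$ has no fixed points, a contradiction. Hence $\{i,q\}_{\mathrm{cl}}$ contains no constant operations.

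The main obstacle will be verifying cleanly that $i$ is an automorphism of $\eia$, i.e.\ that $q(i(x_1),i(x_2),i(x_3))=i(q(x_1,x_2,x_3))$. Rather than grind through all cases, I would argue that $i$ conjugates $q$ into another symmetric ternary operation $q'(x_1,x_2,x_3):=i(q(i(x_1),i(x_2),i(x_3)))$ that satisfies the \emph{same} two defining identities (using $i^2=\id$ and fixed-point-freeness to check $q'(x,y,i(x))=y$ and $q'(x,y,x)=x$), and then invoke the uniqueness of $q$ asserted in Definition~\ref{deia} to conclude $q'=q$, which is precisely the automorphism property. Once $i$ is known to be an automorphism, the induction and the final contradiction are entirely routine.
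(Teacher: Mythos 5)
Your proof is correct and follows essentially the same route as the paper, which simply observes that $i$ is an endomorphism of $\eia$ and that, since endomorphisms preserve every operation in the clone, a constant operation would force its value to be a fixed point of the fixed-point-free involution $i$. Your uniqueness trick for verifying $q(i(x_1),i(x_2),i(x_3))=i(q(x_1,x_2,x_3))$ is a clean way to carry out the case check the paper dismisses as ``straightforward,'' but it does not change the underlying argument.
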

\begin{proof}
It is straightforward to check that $i$ is an endomorphism of $\eia$. As $i$ has no fixed points, $\{i,q\}_{\mathrm{cl}}$
cannot contain any constant operations.
\end{proof}

Following \cite{urbanik}, we will say that $a\in A$ is an \emph{algebraic constant}
if there is $f\in\clf$ such that $f$ is a constant operation with image $\{a\}$.

\begin{theorem}
\label{tmain}
Let $\A=\lan A;F\ran$ be an independence algebra of finite dimension $n$, with $|A|\geq2$. Then the largest subsemilattices
of $\End(\A)$ have either $2^{n-1}$ or $2^n$ elements, with the latter happening exactly
when  the clone of $\A$ contains constant operations.
\end{theorem}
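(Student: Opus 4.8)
The plan is to reduce the theorem to the six classes of nullary-free independence algebras supplied by Urbanik's classification (Theorem~\ref{tcla}) and then settle each class by combining the size of its largest subsemilattice, already computed in Sections~\ref{scia}--\ref{saia}, with the question of whether its clone contains a constant operation. First I would dispose of the degenerate possibility $n=0$: then $A=\lan\emptyset\ran$ is generated by its algebraic constants, every endomorphism fixes each of these and is therefore the identity, so $|E|=1=2^{n}$, while $\clf$ plainly contains constant operations; this matches the claimed $2^{n}$. Hence I may assume $\dim(\A)=n\geq1$.

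The crucial reduction is to replace $\A$ by the nullary-free algebra $\A_1=\lan A;\clf\setminus\con\ran$ of Proposition~\ref{plink}, which tells us that $\A_1$ is again an independence algebra, with $\dim(\A_1)=n$ and $\End(\A_1)=\End(\A)$; in particular $\End(\A)$ and $\End(\A_1)$ have the same largest subsemilattices. I must also verify that the hypothesis on constant operations survives the reduction. Since the clone of $\A_1$ is contained in $\clf$, a constant operation in the clone of $\A_1$ is one in $\clf$. Conversely, if $\clf$ contains a constant operation, then by Lemma~\ref{luna} it contains a \emph{unary} constant operation $u$; as $u$ has arity $1$ it lies in $\clf\setminus\con$ and hence in the clone of $\A_1$. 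Thus $\clf$ contains a constant operation if and only if the clone of $\A_1$ does, and it suffices to prove the theorem for $\A_1$.

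Now I would apply Theorem~\ref{tcla} to $\A_1$ and run through the six types, in each case reading off $|E|$ and deciding whether a constant operation occurs in the clone. For a monoid algebra $\mia$ (dimension $1$), Theorem~\ref{tmia} gives $|E|=1=2^{n-1}$ when $A$ is a group and $|E|=2=2^{n}$ otherwise, while the clone contains a constant exactly when $A$ has a non-unit element $b$, which is then a left zero and yields the constant operation $x\mapsto bx=b$; thus a constant exists precisely when $A$ is not a group. For a quasifield algebra $\qia$ (dimension $2$), Theorem~\ref{tqia} gives $|E|=2=2^{n-1}$, and setting $b=0$ in~(\ref{dqiaeq1}) forces $f(a,\ldots,a)=a$, so no clone operation is constant. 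For the exceptional algebra $\eia$ (dimension $2$), Theorem~\ref{teia} gives $|E|=2=2^{n-1}$, and Lemma~\ref{lea} says the clone has no constant. For a group-action algebra $\gia$, Theorem~\ref{tgia} gives $|E|=2^{n-1}$ if $A_0=\emptyset$ and $|E|=2^{n}$ if $A_0\neq\emptyset$, while by Definition~\ref{dgia} the constant operations in the clone are exactly the maps $x\mapsto a$ with $a\in A_0$, so a constant exists precisely when $A_0\neq\emptyset$. For a linear algebra $\lia$, Theorem~\ref{tlia} gives $|E|=2^{n}$, and the clone always contains the constant operation with value $0$ (take each $\lam_i=0$ and $a=0$). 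Finally, for an affine algebra $\aia$, Theorem~\ref{taia} gives $|E|=2^{n-1}$, and since every clone operation $\sum_i\lam_i x_i+a$ satisfies $\sum_i\lam_i=1$, not all $\lam_i$ vanish and (as $|A|\geq2$) such an operation is never constant. In every case $|E|=2^{n}$ precisely when the clone contains a constant operation and $|E|=2^{n-1}$ otherwise, which is the assertion.

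I expect the main obstacle to be this last, case-by-case identification of when the clone contains a constant operation, particularly for the two families $\mia$ and $\gia$, where the value of $|E|$ already bifurcates. There one must match the bifurcating structural parameter (being a group or not; $A_0$ empty or not) exactly with the existence of a constant operation, rather than merely computing a cardinality. The remaining types are reassuringly uniform, since Lemma~\ref{lea} and the defining identities of $\qia$, $\lia$, and $\aia$ settle the constant-operation question outright.
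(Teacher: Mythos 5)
Your proposal is correct and takes essentially the same route as the paper: dispose of $n=0$, reduce to the nullary-free algebra $\A_1$ of Proposition~\ref{plink} (your explicit verification, via Lemma~\ref{luna}, that the constant-operation condition transfers between $\A$ and $\A_1$ is a point the paper leaves implicit), then pair Theorems~\ref{tmia}--\ref{taia} with a type-by-type determination of whether the clone contains constants. The only step you use tacitly is that in the monoid and quasifield cases \emph{every clone operation}, not just the fundamental ones, satisfies the defining identity (\ref{dmiaeq0}) resp.\ (\ref{dqiaeq1}) --- needed for the direction ``$A$ a group $\Rightarrow$ no constants in the clone'' and for your $b=0$ argument --- which the paper secures by citing Urbanik's closure-under-generalized-composition observations together with Lemma~\ref{luna}.
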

\begin{proof}
Let $E$ be a largest subsemilattice of $\End(\A)$. Suppose $\dim(\A)=0$. This can only happen when
$\A$ has nullary operations that generate $\A$. But then the only endomorphism of $\A$ is the identity,
and so $|E|=1=2^n$.

Suppose that $\dim(\A)\geq1$. By Proposition~\ref{plink}, we may assume that $\A$ does not have any nullary operations.
Then $\A$ is equivalent
to one of the independence algebras listed in Theorem~\ref{tcla}.

Suppose $\A=\mia$ is a monoid independence algebra.
Then $n=1$. Suppose $A$ is a group. Recall that any unary
operation $f$ of $\mia$ is defined by $f(x)=bx$, where $b\in A$ (see Definition~\ref{dmia}). Thus $f$ is not constant since $|A|\geq2$.
The set of operations that satisfy (\ref{dmiaeq0}) is closed under generalized composition \cite[Section~4.2]{urbanik}.
It follows that every unary operation $f$ in $\clf$ is defined by $f(x)=bx$, where $b\in A$.
Thus, by Lemma~\ref{luna}, $\clf$ does not contain any constant operations. By Theorem~\ref{tmia}, $|E|=1=2^{n-1}$.
Suppose $A$ is not a group and select a non-unit $a\in A$. Then $a$ is a left zero in the monoid $A$,
and so the operation $f(x)=ax=a$ is constant. Since $f\in F$, $\clf$ contains constant operations.
By Theorem~\ref{tmia}, $|E|=2=2^n$.

Suppose $\A=\qia$ is a quasifield independence algebra. Then $n=2$.
The only possible unary operation of $\qia$ is the identity \cite[Section~4.3]{urbanik}.
Thus $f$ is not constant since $|A|\geq2$.
The set of operations that satisfy (\ref{dqiaeq1}) is closed under generalized composition \cite[Section~4.3]{urbanik}.
It follows that the only unary operation in $\clf$ is the identity.
Thus, by Lemma~\ref{luna}, $\clf$ does not contain any constant operations. By Theorem~\ref{tqia}, $|E|=2=2^{n-1}$.

Suppose $\A=\eia=(A,i,q)$ is the exceptional independence algebra. Then $n=2$.
By Lemma~\ref{lea}, $\{q,i\}_{\mathrm{cl}}$ does not contain any constant operations.
By Theorem~\ref{teia}, $|E|=2=2^{n-1}$.

Suppose $\A=\gia=(A,A_0,G)$ is a group action independence algebra. By Definition~\ref{dqia},
the clone of $\gia$ contains constant operations if and only if $A_0\ne\emptyset$.
Thus, by Theorem~\ref{tgia}, $|E|=2^{n-1}$ if the clone of $\gia$ does not contain any constant operations,
and $|E|=2^n$ if the clone of $\gia$ contains constant operations.

Suppose $\A=\lia=(A,A_0,K)$ is a linear independence algebra. By Definition~\ref{dlia},
the clone of $\lia$ contains constant operations. By Theorem~\ref{tlia}, $|E|=2^n$.

Suppose $\A=\aia=(A,A_0,K)$ is an affine independence algebra. By Definition~\ref{daia},
the clone of $\lia$ does not contain any constant operations. By Theorem~\ref{taia}, $|E|=2^{n-1}$.

Hence, the result follows from Theorem~\ref{tcla} and the fact that equivalent  algebras
have the same monoids of endomorphisms.
\end{proof}

Let $\A=\lan\{a\};F\ran$ and let $n=\dim(\A)$. Then $\End(\A)=\{\id_A\}$, so $|E|=1$.
The clone of $\A$ contains constant operations since all projections are constant.
If $a$ is a nullary operation of $\A$, then $n=0$, and so $|E|=1=2^n$ (that is, Theorem~\ref{tmain} holds).
If $a$ is not a nullary operation of $\A$, then $n=1$, and so $|E|=1\ne2^n$ (that is, Theorem~\ref{tmain} fails).

\section{Problems}
\label{spro}

In this section,
we present some problems that might attract attention of experts in linear algebra, ring theory, extremal combinatorics, group theory, semigroup theory,
universal algebraic geometry, and universal algebra.

The overwhelming majority of theorems proved about independence algebras do not use any classification theorems.
Therefore, the next problem is natural.

\begin{prob}
{\rm
Is it possible to prove Theorem \ref{tmain} without using the classification theorem for $v^*$-algebras?
}
\end{prob}

Fountain and Gould defined the class of \emph{weak exchange algebras}, which contains independence algebras, weak
independence algebras, and basis algebras, among others \cite{FoGo03,FoGo04}.

\begin{prob}
{\rm
Prove an analogue of Theorem \ref{tmain} for the class of weak exchange algebras
and its subclasses.
}
\end{prob}

As we have already indicated,
the study of graphs induced by  algebras has attracted a great deal of attention in recent years.
The commuting graph of a semigroup $S$ is the graph whose vertices are the non-central elements of
$S$ and two of them form an edge if  they commute as elements of the semigroup.
The idempotent commuting graph of a semigroup $S$ is the commuting graph of $S$
restricted to the idempotent elements.
Our main theorem provides the clique number
of the idempotent commuting graphs of the monoids $\End(\A)$, when $\A$ is an independence algebra.

\begin{prob}
\label{pgra}
{\rm
Let $\A$ be an independence algebra.
Find the diameter of the idempotent commuting graph of $\End(\A)$.
Similarly, find the diameter of the commuting graph of $\End(\A)$.
(Some related results are contained in \cite{ArKiKo}.)
}
\end{prob}

The following problem may be difficult to solve in all generality, but some progress might be achieved
using the classification theorem for $v^*$-algebras.

\begin{prob}
\label{pcli}
{\rm
Find the clique number of the commuting graph of $\End(\A)$, where $\A$ is an independence algebra.
}
\end{prob}

We point out that the answer to this question is not known even in the case of $T(X)$,
the full transformation monoid on a finite set $X$.
The problem has been solved for the finite symmetric inverse semigroup \cite{ArBeKo}, but the complexity of the
arguments in that paper suggests that Problem~\ref{pcli} might be very hard for some classes of independence algebras.

The first author and Wehrung  \cite{ArWe} introduced a large number of classes of algebras that generalize
independence algebras.

\begin{prob}
{\rm
Prove a result similar to  Theorem~\ref{tmain} and
solve analogs of Problems~\ref{pgra} and~\ref{pcli}
for $MC$-algebras, $\mathit{MS}$-algebras, $\mathit{SC}$-algebras, and $\mathit{SC}$-ranked algebras  \cite[Chapter 8]{ArWe}.

A first step in solving these problems would be to find the size of the largest semilattice contained in the endomorphism monoid of an
$\mathit{SC}$-ranked free $M$-act \cite[Chapter 9]{ArWe},
and for an $\mathit{SC}$-ranked free module over an $\aleph_1$-Noetherian ring \cite[Chapter 10]{ArWe}.
}
\end{prob}

Finally, we suggest a family of problems that concern groups of automorphisms.

\begin{prob}
\label{plast}
{\rm
Let $\A$ be an algebra belonging to one of the  classes of algebras referred to above
(independence algebras, $v^*$-algebras, weak exchange algebras,
$\mathit{MC}$-algebras, $\mathit{MS}$-algebras,
$\mathit{SC}$-algebras, $\mathit{SC}$-ranked algebras, $\mathit{SC}$-ranked free $M$-act,
$\mathit{SC}$-ranked free modules over an $\aleph_1$-Noetherian ring, any of the classes in Urbanik's classification, etc.). Let $\End(\A)$ be  the monoid of endomorphisms of $\A$. Describe the automorphisms of $\End(\A)$.
}
\end{prob}

We observe that Problem~\ref{plast} is linked to one of the main questions
in \emph{universal algebraic geometry},
and it is still not solved for some classes of independence algebras, let alone for more general classes.
On the other hand,  many instances of this problem have been solved for other classes of algebras \cite{plotkin,plotkin2,Zi}.
(See also \cite{ArBuMiNeu,dobson,fernandes,ArKo7,ArKo7b,ArKo9a,ArKo9,Belov,Bel2,Be,MaPl}.)

\section*{Acknowledgements}
We are grateful to the referee for suggesting a simplification of the arguments in Sections~\ref{slia} and~\ref{saia},
and for other useful comments.

The  first author was partially supported by FCT through PEst-OE/MAT/UI0143/2011.
The second author  has received funding from the
European Union Seventh Framework Programme (FP7/2007-2013) under
grant agreement no.\ PCOFUND-GA-2009-246542 and from the Foundation for
Science and Technology of Portugal.

\end{document}